\newtheorem{theorem}{Theorem}
\newtheorem{proposition}{Proposition}[section]
\newtheorem{lemma}[proposition]{Lemma}
\theoremstyle{definition}
\newtheorem{definition}[proposition]{Definition}
\newtheorem{remark}[proposition]{Remark}
\newtheorem{definitionlemma}[proposition]{Definition and Lemma}
\newcommand{\Proj}{\mathrm{Proj }}
\newcommand{\diag}{\mathrm{diag}}
\newcommand\dashmapsto{\mapstochar\dashrightarrow}
\newcommand{\PP}{\mathbb{P}}
\newcommand{\p}{\mathbb{P}}
\newcommand{\Aut}{\mathrm{Aut}}
\newcommand{\PGL}{\mathrm{PGL}}
\newcommand{\C}{\mathbb{C}}
\newcommand{\R}{\mathbb{R}}
\newcommand{\N}{\mathbb{N}}
\renewcommand{\epsilon}{\varepsilon}
\renewcommand{\phi}{\varphi}
\DeclareMathOperator{\Bir}{Bir}
\newcommand{\Dec}{\mathrm{Dec}}
\newcommand{\Ine}{\mathrm{Ine}}
\newcommand\SB[1][\scalebox{0.8}]{#1}
\newcommand{\Bp}{\mathrm{Bp}}
\def\dashmapsto{\mapstochar\dashrightarrow}
\title[The decomposition group of a line in the plane]{The decomposition group of a line in the plane}
\author{Isac Hed\'en and Susanna Zimmermann}
\address{Isac Hed\'en\\
Research Institute for Mathematical Sciences\\
Kyoto University\\
\mbox{Kyoto} 606-8502 Japan}
\email{Isac.Heden@kurims.kyoto-u.ac.jp}
\address{Susanna Zimmermann\\
Departement Mathematik und Informatik\\
Universit\"at Basel\\
Spiegelgasse 1\\
  4051 Basel, Switzerland}
\email{Susanna.Zimmermann@unibas.ch}
\thanks{The first named author is an International Research Fellow of the Japanese Society for the Promotion of Sciences, and this work was supported by Grant-in-Aid for JSPS Fellows Number 15F15751. Both authors acknowledge support by the Swiss National Science Foundation Grant ''Birational Geometry'' PP00P2\_153026.}
\date{\today}
\begin{document}
\begin{abstract}{We show that the decomposition group of a line $L$ in the plane, i.e. the subgroup of plane birational transformations that send $L$ to itself birationally, is generated by its elements of degree 1 and one element of degree 2, and that it does not decompose as a non-trivial amalgamated product.}\end{abstract}

\subjclass[2010]{14E07}
\maketitle

\section{Introduction}
We denote by $\Bir(\PP^2)$ the group of birational transformations of the projective plane $\p^2=\Proj(k[x,y,z])$, where $k$ is an algebraically closed field. Let $C\subset\p^2$ be a curve, and let $$\Dec(C)=\{\varphi\in\Bir(\p^2),\,\varphi(C)\subset C \textrm{ and } \varphi|_C:C\dasharrow C\textrm{ is birational} \}.$$
This group has been studied for curves of genus $\geq 1$ in \cite{BPV2009}, where it is linked to the classification of finite subgroups of $\Bir(\PP^2)$. It has a natural subgroup $\Ine(C)$, the \emph{inertia group} of $C$, consisting of elements that fix $C$, and Blanc, Pan and Vust give the following result: for any line $L\subset\p^2$, the action of $\Dec(L)$ on $L$ induces a split exact sequence
\[0\longrightarrow \text{Ine}(L)\longrightarrow\Dec(L)\longrightarrow\PGL_2=\Aut(L)\longrightarrow0\]
and $\text{Ine}(L)$ is neither finite nor abelian and also it doesn't leave any pencil of rational curves invariant \cite[Proposition 4.1]{BPV2009}. Further they ask the question whether $\Dec(L)$ is generated by its elements of degree 1 and 2 \cite[Question 4.1.2]{BPV2009}.

We give an affirmative answer to their question in the form of the following result, similar to the Noether-Castelnuovo theorem \cite{Cas} which states that $\Bir(\PP^2)$ is generated by $\sigma\colon[x:y:z]\dashmapsto[yz:xz:xy]$ and $\Aut(\p^2)=\mathrm{PGL}_3$.

\begin{theorem}\label{thm:generators}
For any line $L\subset\PP^2$, the group $\Dec(L)$ is generated by $\Dec(L)\cap\PGL_3$ and any of its quadratic elements having three proper base points in $\PP^2$. 
\end{theorem}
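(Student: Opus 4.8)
The plan is to imitate the Noether--Castelnuovo theorem: starting from an arbitrary $\varphi\in\Dec(L)$ of degree $d$, I would successively compose it with quadratic and linear elements of $\Dec(L)$ so as to strictly lower $d$, until reaching degree $1$. Fix $L$ once and for all (allowed, as $\PGL_3$ acts transitively on lines) and write $G=\langle\,\Dec(L)\cap\PGL_3,\ \tau\,\rangle$, where $\tau$ is the chosen quadratic element with three proper base points; the goal is $G=\Dec(L)$. Since the whole argument is an induction on $d$ with base case $d=1$ giving $\varphi\in\Dec(L)\cap\PGL_3\subseteq G$, it suffices to produce, for each $\varphi$ of degree $d\geq 2$, a quadratic element of $\Dec(L)$ lying in $G$ whose composition with $\varphi$ has smaller degree.

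The first preparatory step is to pin down which quadratic maps are available. Taking $L=\{z=0\}$, the group $\Dec(L)\cap\PGL_3$ is the affine group $\Aff_2$ of $\PP^2\setminus L=\A^2$, and its linear part surjects onto $\PGL_2=\Aut(L)$; via the split sequence $0\to\Ine(L)\to\Dec(L)\to\PGL_2\to 0$ this lets me multiply $\varphi$ by a linear element of $G$ and assume $\varphi\in\Ine(L)$, so that $\varphi$ fixes $L$ pointwise, which will give extra control over the base points lying on $L$. A quadratic map with three proper base points belongs to $\Dec(L)$ precisely when $L$ passes through exactly one of its base points and the resulting line is again $L$; in that case $L$ is neither contracted nor sent to a conic. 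I would then check that every such quadratic map already lies in $G$: given two of them, an element of $\Aff_2$ sends the base-point configuration (one point on $L$, two off $L$, in general position) of the first to that of the second, so after pre-composition with it the two quadratic maps share the same three proper base points and hence differ by post-composition with a linear map; this linear map preserves $L$ and so lies in $\Dec(L)\cap\PGL_3$. Hence any quadratic element of $\Dec(L)$ with three proper base points is at our disposal in the reduction.

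The arithmetic heart is the degree drop. Let $\mu_i$ be the multiplicities of the homaloidal net of $\varphi$ at its base points. From $\varphi(L)=L$ and the degree formula $\deg\varphi(L)=d-\sum_{p_i\in L}\mu_i$ one obtains the two constraints $\sum_{p_i\in L}\mu_i=d-1$ and $\sum_{p_i\notin L}\mu_i=2(d-1)$. Together with Noether's inequality $\mu_1+\mu_2+\mu_3\geq d+1$ for the three largest multiplicities, the aim is to exhibit three base points, one on $L$ and two off $L$ and in general position, whose multiplicities sum to more than $d$. Composing $\varphi$ with the corresponding quadratic element of $\Dec(L)$ then yields an element of $\Dec(L)$ of degree $2d-(\mu_{a}+\mu_{b}+\mu_{c})<d$, which lies in $G$ by the previous step together with the induction hypothesis, whence $\varphi\in G$.

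The main obstacle is exactly the compatibility of this reduction with $L$, and it is the analogue of the delicate part of Noether--Castelnuovo. First, the three base points of largest multiplicity need not be proper points in general position: some may be infinitely near, and then no honest quadratic map is based at them. Second, and specific to our setting, they must be distributed as \emph{one on $L$ and two off $L$}, whereas a priori the three largest multiplicities could all occur off $L$, or two of them on $L$. Overcoming this requires a careful case analysis of the configurations of (possibly infinitely near) base points relative to $L$: one uses the constraint $\sum_{p_i\in L}\mu_i=d-1$, sharpened by the fact that $\varphi$ fixes $L$ pointwise, to locate enough multiplicity on $L$, and composes with further elements of $\Dec(L)\cap\PGL_3$ (and, where needed, a single auxiliary quadratic of $\Dec(L)$ to separate infinitely near points) so as to realise a genuine quadratic element of $\Dec(L)$ that strictly lowers the degree. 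Carrying out this configuration analysis, rather than the bookkeeping of multiplicities, is where the real work lies.
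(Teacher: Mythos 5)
Your proposal has a genuine gap, and it sits exactly where you say "the real work lies." The normalization steps you do carry out are fine: reducing to $\Ine(L)$ via the split sequence, and the observation that any two quadratic elements of $\Dec(L)$ with three proper base points differ by pre- and post-composition with elements of $\Dec(L)\cap\PGL_3$ (this last point is essentially the final step of the paper's own proof, where both maps are normalized to the standard involution $\sigma$). But the core of your induction --- producing, for every $\varphi\in\Dec(L)$ of degree $d\geq 2$, a quadratic element of $\Dec(L)$ whose base points are \emph{proper}, \emph{non-collinear}, distributed as one on $L$ and two off $L$, and carry multiplicity sum $\geq d+1$ --- is asserted, not proved. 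You state the homaloidal constraints and Noether's inequality, note that the required triple need not exist among the three largest multiplicities, and then defer the entire configuration analysis ("infinitely near points," "two of them on $L$," etc.) to a "careful case analysis" that is never performed. This is not a routine verification one can wave at: it is the equivariant analogue of the step that made the classical Noether--Castelnuovo theorem so delicate (infinitely near and collinear base points are precisely where Noether's original argument broke down), and here it is further constrained by the requirement that the auxiliary quadratic map respect $L$. Your suggested fixes (composing with elements of $\Dec(L)\cap\PGL_3$, or "a single auxiliary quadratic of $\Dec(L)$ to separate infinitely near points") are not backed by any argument that they terminate or even apply; as written, the induction step could simply fail on a configuration you have not excluded.

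It is worth noting that the paper does not attempt this direct equivariant induction at all, plausibly because of exactly this obstruction. Instead it starts from an arbitrary Noether--Castelnuovo decomposition of $\varphi$ into quadratic maps \emph{in} $\Bir(\PP^2)$ (whose factors need not preserve $L$) and modifies it in two stages: first (Proposition~\ref{Prop:Lnotcontracted}) it rearranges the decomposition, using the invariant $D(L,\cdot)$ and insertions of quadratic maps through general auxiliary points, so that no partial composition contracts $L$; then (Proposition~\ref{prop:djdecomp} together with Lemmata~\ref{lem:line1}, \ref{lem:line2}, \ref{lem:decompgeneric}) it uses de Jonqui\`eres maps and an induction on a lexicographic pair $(D,k)$ to force every intermediate image of $L$ to be a line. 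Only at the very end are the factors conjugated by linear maps into $\Dec(L)$. The advantage of that route is that the multiplicity/configuration analysis is carried out against the de Jonqui\`eres structure (Lemma~\ref{Lem:2points}), where base points have multiplicities $(e-1,1,\dots,1)$ and general auxiliary points can always be inserted, rather than against an arbitrary homaloidal net constrained by $L$, which is what your plan would require and does not supply.
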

The similarities between $\Dec(L)$ and $\Bir(\p^2)$ go further than this. Cornulier shows in \cite{C13} that $\Bir(\p^2)$ cannot be written as an amalgamated product in any nontrivial way, and we modify his proof to obtain an analogous result for $\Dec(L)$.

\begin{theorem}\label{cor:amalgam}
The decomposition group $\Dec(L)$ of a line $L\subset\PP^2$ does not decompose as a non-trivial amalgam.
\end{theorem}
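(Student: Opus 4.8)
The plan is to argue by contradiction, following closely Cornulier's treatment of $\Bir(\p^2)$ in \cite{C13} and adapting it to the relative setting. Suppose $\Dec(L)$ splits as a nontrivial amalgam $A\ast_C B$ (with $C$ proper in both $A$ and $B$). Then $\Dec(L)$ acts without inversion on the associated Bass--Serre tree $T$, the vertex stabilisers are the conjugates of $A$ and $B$, and by nontriviality \emph{no} vertex of $T$ is fixed by all of $\Dec(L)$: a globally fixed vertex would force $\Dec(L)$ into a single vertex stabiliser, collapsing the amalgam. The whole strategy is therefore to analyse this action and to manufacture a global fixed vertex, which is the desired contradiction. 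Note that since the action is without inversion, it is enough to fix any point of the geometric realisation, as a fixed midpoint of an edge forces both endpoints to be fixed.

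The first ingredient is that the ``algebraic'' building blocks of $\Dec(L)$ act elliptically, i.e.\ each fixes at least one vertex. Over an algebraically closed field the diagonal torus $(\G_m)^2$ and the factors $\G_m$ are divisible, while each root subgroup $\cong\G_a$ is divisible in characteristic $0$ and torsion in characteristic $p$; in every case a simplicial automorphism arising from such a one--parameter subgroup cannot have nonzero (integral) translation length, so it is elliptic. Consequently every element of the parabolic subgroup $P=\Dec(L)\cap\PGL_3=\Stab_{\PGL_3}(L)$, which is generated by its maximal torus and its unipotent subgroups, is elliptic, and any chosen quadratic involution $\tau\in\Dec(L)$ with three proper base points is elliptic because it has finite order. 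By Theorem~\ref{thm:generators}, $P$ and $\tau$ together generate $\Dec(L)$.

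The second step is to upgrade ellipticity of $P$ to an honest nonempty fixed subtree $F_P$: an action all of whose elements are elliptic fixes either a vertex or a unique end, and one must rule out the fixed--end alternative. If $P$ fixed an end $\xi$, then each of its elements would fix a ray towards $\xi$, exhibiting $P$ as an increasing union of proper vertex stabilisers along a ray; I would exclude this exactly as Cornulier excludes it for $\PGL_3(k)$, by playing the structural/bounded--generation properties of the group against such an increasing presentation. The delicate point, and a genuine departure from \cite{C13}, is that here only the parabolic $P$ is available rather than all of $\PGL_3$, and $P$ is not semisimple (its Levi contributes a nontrivial abelianisation), so the exclusion of ends must be carried out with the weaker structure of $P$ and uniformly in the characteristic of $k$. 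Granting $F_P\neq\emptyset$, the problem reduces to showing $F_P\cap\mathrm{Fix}(\tau)\neq\emptyset$, since any common fixed point of $P$ and $\tau$ is a global fixed point of $\langle P,\tau\rangle=\Dec(L)$.

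Establishing $F_P\cap\mathrm{Fix}(\tau)\neq\emptyset$ is where I expect the real difficulty to lie. Rather than confront the pair $(P,\tau)$ directly, I would introduce a large supply of elliptic subgroups — the torus, the individual unipotent subgroups, and several $P$--conjugates of $\tau$ — and exploit the relations available \emph{inside} $\Dec(L)$ (for instance that $\tau$ normalises the diagonal torus and interchanges prescribed root subgroups, and that two quadratic maps sharing base points differ by an element of $P$) to force the corresponding fixed subtrees to meet pairwise. Once pairwise intersection of a generating family of fixed subtrees is known, the Helly property for subtrees of a tree supplies a common fixed vertex, contradicting the absence of a global fixed vertex. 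The crux, and the main obstacle, is precisely this combinatorial core: Cornulier's relations, which use arbitrary elements of $\PGL_3$ and freely chosen quadratic maps, must be replaced by relations respecting the line $L$, and one must verify that the constrained relations still make the fixed sets interlock. I expect the rest of the argument to be bookkeeping, but this interlocking step to require the most care.
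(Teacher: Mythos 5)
Your overall architecture --- act on the Bass--Serre tree of a putative splitting $\Dec(L)=A\ast_C B$, show that the generators provided by Theorem~\ref{thm:generators} are elliptic, and then force a global fixed point --- is the same general strategy as the paper's (via Cornulier), and your divisibility/torsion observation is a legitimate, more elementary substitute for the paper's property~(T) input (the Shalom--Vaserstein theorem in Lemma~\ref{lem:A5}) at the level of single elements acting on a simplicial tree. But the proposal is not a proof, because the step you yourself call ``the crux'' --- making the fixed subtrees of the parabolic $\mathcal{A}_L=\Dec(L)\cap\PGL_3$ (your $P$) and of the quadratic involution $\sigma$ interlock --- is left entirely unexecuted, and it is precisely where the paper invests all of its work. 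Concretely, the paper proves a tailored generation statement (Lemma~\ref{lem:A6}: $\mathcal{A}_L=\langle\mathcal{D}_L,\mu_1,\mu_2,P\rangle$ for two explicit involutions $\mu_1,\mu_2$ and a transposition matrix $P$), observes that $\sigma\mu_1$ and $\sigma\mu_2$ have orders $3$ and $6$, and then, in the case where the fixed trees $\mathcal{T}_1$ of $\mathcal{A}_L$ and $\mathcal{T}_2$ of $\sigma\mathcal{A}_L\sigma$ are disjoint, derives a contradiction from the two computations $d\bigl(x_1,(\sigma\mu_I)^kx_1\bigr)=ks$ and $(\sigma\mu_I)^6=1$. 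No Helly-type bookkeeping yields the needed pairwise intersections for free: finding relations inside $\Dec(L)$ that force them \emph{is} the theorem, and your proposal stops exactly at that point.

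There are two further gaps. First, ``$P$ is generated by its maximal torus and its unipotent subgroups, hence every element of $P$ is elliptic'' is a non sequitur: a product of elliptic isometries of a tree need not be elliptic (this failure is the whole difficulty of the subject, and is exactly why nontrivial amalgams contain hyperbolic elements). What you need is ellipticity of each \emph{individual} element of $\mathcal{A}_L$, which can be rescued via Jordan decomposition: the semisimple part admits $n$-th roots for every $n$ inside the stabilizer of $L$, and the commuting unipotent part is divisible in characteristic $0$ and torsion in characteristic $p$, so every element has translation length divisible by every integer, hence zero. Second, your plan for the case where $\mathcal{A}_L$ fixes an end but no vertex misreads Cornulier: he does not \emph{exclude} that case for $\PGL_3$, and there is no apparent way to exclude it for $\mathcal{A}_L$ either (your appeal to ``structural/bounded-generation properties'' of the parabolic is not an argument, and $\mathcal{A}_L$ has weaker structure than $\PGL_3$, not more). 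Instead, both Cornulier and the paper (Lemma~\ref{lem:A7}) \emph{use} that case: if $\mathcal{A}_L$ has no fixed point it stably fixes a unique end, and one shows --- using that $\sigma$ normalizes $\langle\mathcal{D}_L,P\rangle$ together with Lemma~\ref{lem:A6} --- that then all of $\Dec(L)$ stably fixes that end; hence every element of $\Dec(L)$ is elliptic, which contradicts the existence of hyperbolic elements such as $ab$ with $a\in A\setminus C$, $b\in B\setminus C$. So the end case must be absorbed into the conclusion, not ruled out, and your proposal as written would stall there.
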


The article is organised as follows: in Section~\ref{sec:nocontraction} we show that for any element of $\Dec(L)$ we can find a decomposition in $\Bir(\PP^2)$ into quadratic maps such that the successive images of $L$ are curves (Proposition~\ref{Prop:Lnotcontracted}), i.e. the line is not contracted to a point at any time. We then show in Section~\ref{sec:onlylines} that we can modify this decomposition, still in $\Bir(\PP^2)$, into de Jonqui\`eres maps where all of the successive images of $L$ have degree 1, i.e. they are lines. Finally we prove Theorem~\ref{thm:generators}. Our main sources of inspiration for techniques and ideas in Section~\ref{sec:onlylines} have been \cite[\S8.4, \S8.5]{alberich} and \cite{Bla12}. In Section~\ref{sec:noamalgam} we prove Theorem~\ref{cor:amalgam} using ideas that are strongly inspired by \cite{C13}.\\

{\bf Acknowledgement:} The authors would like to thank J\'er\'emy Blanc for helpful discussions, and Yves de Cornulier for kindly answering their questions.

\section{Avoiding to contract $L$}\label{sec:nocontraction}
Given a birational map $\rho\colon\p^2\dasharrow\p^2$, the Noether-Castelnuovo theorem states that there is a decomposition $\rho=\rho_m\rho_{m-1}\dots\rho_1$ of $\rho$ where each $\rho_i$ is a quadratic map with three proper base points. This decomposition is far from unique, and the aim of this section is to show that if $\rho\in\Dec(L)$, we can choose the $\rho_i$ so that none of the successive birational maps $(\rho_i\dots\rho_1\colon\p^2\dasharrow\p^2)_{i=1}^m$ contracts $L$ to a point. This is Proposition~\ref{Prop:Lnotcontracted}.\par
Given a birational map $\phi:X\dasharrow Y$ between smooth projective surfaces, and a curve $C\subset X$ which is contracted by $\phi$, we denote by $\pi_1\colon Z_1\to Y$ the blowup of the point $\phi(C)\in Y$. If $C$ is contracted also by the birational map $\pi_1^{-1}\phi\colon X\dasharrow Z_1$, we denote by $\pi_2\colon Z_2\to Z_1$ the blowup of $(\pi_1^{-1}\phi)(C)\in Z_1$ and consider the birational map $(\pi_1\pi_2)^{-1}\phi\colon X\dasharrow Z_2$. If this map too contracts $C$, we denote by $\pi_3\colon Z_3\to Z_2$ the blowup of the point onto which $C$ is contracted. Repeating this procedure a finite number of times $D\in\N$, we finally arrive at a variety $Z:=Z_D$ and a birational morphism $\pi:=\pi_1\pi_2\cdots\pi_D\colon Z\to Y$ such that $(\pi^{-1}\phi)$ does not contract $C$. Then $(\pi^{-1}\phi)|_C\colon C\dasharrow (\pi^{-1}\phi)(C)$ is a birational map.
\begin{definition}
In the above situation, we denote by $D(C,\phi)\in\N$ the minimal number of blowups which are needed in order to not contract the curve $C$ and we say that $C$ is contracted $D(C,\phi)$ times by $\phi$. In particular, a curve $C$ is sent to a curve by $\phi$ if and only if $D(C,\phi)=0$.
\end{definition}
\begin{remark}
The integer $D(C,\phi)$ can equivalently be defined as the order of vanishing of $K_Z-\pi^*(K_Y)$ along $(\pi^{-1}\phi)(C)$.
\end{remark}

We recall the following well known fact, which will be used a number of times in the sequel.

\begin{lemma}\label{lem:connecting}
Let $\phi_1,\phi_2\in\Bir(\p^2)$ be birational maps of degree $2$ with proper base points $p_1,p_2,p_3$ and $q_1,q_2,q_3$ respectively. If $\phi_1$ and $\phi_2$ have (exactly) two common base points, say $p_1=q_1$ and $p_2=q_2$, then the composition $\tau= \phi_2\phi_1^{-1}$ is quadratic. Furthermore the three base points of $\tau$ are proper points of $\PP^2$ if and only if $q_3$ is not on any of the lines joining two of the $p_i$.
\end{lemma}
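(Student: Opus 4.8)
The plan is to compute the homaloidal net of $\tau$ directly, as the strict transform under $\phi_1$ of the homaloidal net of $\phi_2$, and then to read off its base points. First I would record that for a general line $\ell$ one has $\tau^{-1}(\ell)=\phi_1\big(\phi_2^{-1}(\ell)\big)$, and that $\phi_2^{-1}(\ell)$ is a general member of the net of conics through the three base points $q_1=p_1$, $q_2=p_2$, $q_3$ of $\phi_2$. Hence the homaloidal net of $\tau$ is the image under $\phi_1$ of this net of conics. Index the base points $b_1,b_2,b_3$ of $\phi_1^{-1}$ so that $\phi_1$ contracts the line $\overline{p_jp_k}$ opposite to $p_i$ onto $b_i$. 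A general conic $C$ through $q_1,q_2,q_3$ has multiplicity $1$ at $p_1$ and $p_2$ and multiplicity $0$ at $p_3$; the last equality holds because only two base points are shared, so $q_3\neq p_3$ and a general such $C$ avoids $p_3$. By the standard multiplicity formula for the action of a quadratic transformation on a curve (cf.\ \cite{alberich}), $\phi_1(C)$ has degree $2\cdot 2-1-1-0=2$ and multiplicities $1,1,0$ at $b_1,b_2,b_3$. Thus the net of $\tau$ consists of conics and $\tau$ is quadratic, which is the first assertion.

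Next I would locate the base points of $\tau$. Since every member of the net of $\phi_2$ passes through $q_3$, every member of the net of $\tau$ passes through $\phi_1(q_3)$; together with the computation above this shows that the three base points of $\tau$ are $b_1$, $b_2$ and $\phi_1(q_3)$. For the equivalence I would then analyse when $\phi_1(q_3)$ is a proper point different from $b_1$ and $b_2$. As $q_3\notin\{p_1,p_2,p_3\}$, the point $\phi_1(q_3)$ is a well-defined proper point of $\PP^2$, and it equals $b_i$ exactly when $q_3$ lies on the contracted line $\overline{p_jp_k}$; that is, $\phi_1(q_3)=b_1$ iff $q_3\in\overline{p_2p_3}$ and $\phi_1(q_3)=b_2$ iff $q_3\in\overline{p_1p_3}$. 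In either of these cases the third base point of $\tau$ becomes infinitely near $b_1$ (resp.\ $b_2$), so $\tau$ does not have three proper base points. The third line $\overline{p_1p_2}=\overline{q_1q_2}$ cannot contain $q_3$, for otherwise $q_1,q_2,q_3$ would be collinear and $\phi_2$ would fail to be a quadratic map with three proper base points. Conversely, if $q_3$ lies on none of the three lines then $\phi_1(q_3)$ is proper and distinct from $b_1,b_2$, and since $\tau$ is birational of degree $2$ these three base points are automatically non-collinear; hence they are three proper base points. This is exactly the stated equivalence.

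The delicate point is to show that when $q_3$ lies on $\overline{p_1p_3}$ or $\overline{p_2p_3}$ the surviving base point is genuinely infinitely near, rather than a third proper point (this is the direction responsible for $\tau$ failing to have three proper base points). I would make this explicit by normalising $\phi_1$ to the standard involution $\sigma\colon[x:y:z]\dashmapsto[yz:xz:xy]$ with the $p_i$ the coordinate points, so that $b_i=p_i$: writing a general conic of the net of $\phi_2$ as $cz^2+dxy+exz+fyz=0$ and imposing $q_3\in\{x=0\}$, one computes the image conics $cXY+dZ^2+eYZ+fXZ=0$ and checks that they all share a fixed tangent direction at $b_1=[1:0:0]$, which exhibits the infinitely near base point directly. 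The same coordinate computation simultaneously confirms the degree count and the positions of the base points, and so can serve as a self-contained verification of the whole statement.
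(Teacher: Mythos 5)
Your proof is correct and follows essentially the same route as the paper: both arguments compute the homaloidal net of $\tau$ as the transform under $\phi_1$ of the net of conics through $q_1,q_2,q_3$, and identify its base points as the two base points of $\phi_1^{-1}$ over $p_1,p_2$ together with $\phi_1(q_3)$, which degenerates to an infinitely near point exactly when $q_3$ lies on a line contracted by $\phi_1$. The only difference is presentational: the paper carries out this computation pictorially (Figure~\ref{fig:07}), whereas you make it explicit via the multiplicity formulas and a coordinate verification of the infinitely near case.
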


\begin{proof}
The lemma is proved by Figure~\ref{fig:07}, where squares and circles in $\p^2_2$ denote the base points of $\phi_1$ and $\phi_2$ respectively. The crosses in $\p^2_1$ denote the base points of $\phi_1^{-1}$ (corresponding to the lines in $\p^2_2$), and the conics in $\p^2_1$ and $\p^2_2$ denote the pullback of a general line $\ell\in\p^2_3$.

\begin{figure}[h]
\centering
\begin{minipage}{.95\columnwidth}

\def\svgwidth{0.95\textwidth}
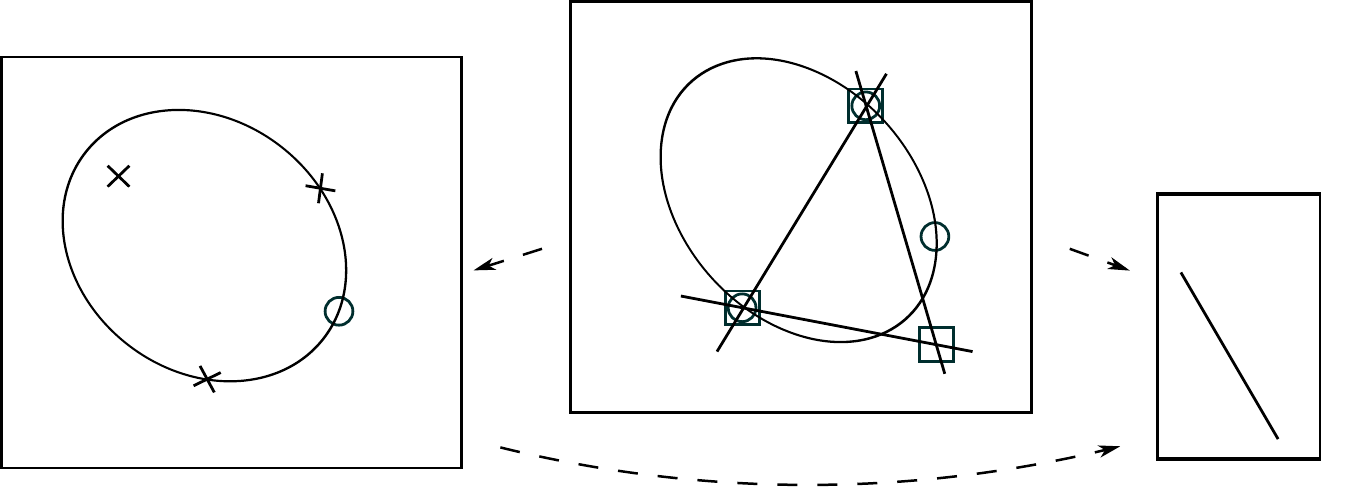
\caption{The composition of $\varphi_1$ and $\varphi_2$ in Lemma~\ref{lem:connecting}}\label{fig:07}
\end{minipage}
\end{figure}
\vspace{.5cm}
If $q_3$ is not on any of the three lines, the base points of $\tau$ are $E_1,E_2,\phi_1(q_3)$. If $q_3$ is on one of the three lines, then the base points of $\tau$ are $E_1,E_2$ and a point infinitely close to the $E_i$ which corresponds to the line that $q_3$ is on.
\end{proof}

The following lemma describes how the number of times that a line is contracted changes when composing with a quadratic transformation of $\p^2$ with three proper base points.
\begin{lemma}\label{lem:4cases} Let $\rho\colon\p^2\dasharrow\p^2$ be a birational map and let $\phi\colon\p^2\dasharrow\p^2$ be a quadratic birational map with base points $q_1,q_2,q_3\in\p^2$. For $1\leq i<j\leq 3$ we denote by $\ell_{ij}\subset\p^2$ the line which joins the base points $q_i$ and $q_j$. If $D(L,\rho)=k\geq 1$, we have
$$D(L,\phi\rho)=\begin{cases}
k+1 &\textrm{ if } \rho(L)\in (\ell_{12}\cup\ell_{13}\cup\ell_{23})\setminus\Bp(\phi),\\
k &\textrm{ if } \rho(L)\notin \ell_{12}\cup\ell_{13}\cup\ell_{23},\\

k &\textrm{ if } \rho(L)=q_i\textrm{ for some } i,\textrm{ and }(\rho\phi)(L)\in\Bp(\phi^{-1}),\\
k-1 &\textrm{ if } \rho(L)=q_i\textrm{ for some } i,\textrm{ and } (\rho\phi)(L)\notin\Bp(\phi^{-1}).
\end{cases}
$$
\end{lemma}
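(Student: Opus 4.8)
The plan is to reduce the whole statement to a single elementary fact about how $D(L,\cdot)$ changes under one blowup, and then to split $\phi$ into blowups and blowdowns through the standard resolution of a quadratic map. First I would record the behaviour under one blowup. Let $\beta\colon\tilde Y\to Y$ be the blowup of a point $q$, with exceptional curve $E$, and let $\psi\colon\p^2\dasharrow Y$ be a birational map. Reading off the very procedure that defines $D$: if $\psi$ contracts $L$ and the point $\psi(L)\in Y$ onto which $L$ is contracted equals $q$, then applying $\beta^{-1}$ performs exactly the first blowup of that procedure, so $D(L,\beta^{-1}\psi)=D(L,\psi)-1$ and the new image point of $L$ lies on $E$; in every other case $\beta$ is an isomorphism near $\psi(L)$ (or $L$ is not contracted at all), and $D(L,\beta^{-1}\psi)=D(L,\psi)$ with the image point unchanged. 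Below I write $[P]=1$ if a statement $P$ holds and $[P]=0$ otherwise.

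\textbf{Reduction.} Next I would factor $\phi$ through the surface $S$ obtained by blowing up $q_1,q_2,q_3$: let $\sigma\colon S\to\p^2$ be this blowup and $\tau\colon S\to\p^2$ the contraction of the strict transforms $\tilde\ell_{12},\tilde\ell_{13},\tilde\ell_{23}$, so that $\phi=\tau\sigma^{-1}$ and the three points $\tau(\tilde\ell_{ij})$ are precisely the base points of $\phi^{-1}$. Writing $\sigma^{-1}\rho=\tau^{-1}(\phi\rho)$ and applying the single-blowup fact to the three blowups composing $\sigma$, and then to the three blowups composing $\tau^{-1}$ (read from the target side, where $\tau$ is a blowup of $\Bp(\phi^{-1})$), I obtain
\[
D(L,\sigma^{-1}\rho)=k-[\rho(L)\in\Bp(\phi)]
\quad\text{and}\quad
D(L,\sigma^{-1}\rho)=D(L,\phi\rho)-[(\phi\rho)(L)\in\Bp(\phi^{-1})].
\]
Here one uses that the image point of $L$ is a single (honest, since $k\geq1$) point, so it coincides with at most one of the three distinct centres, and once it has jumped onto an exceptional curve it cannot meet the remaining honest centres. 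Eliminating $D(L,\sigma^{-1}\rho)$ gives the master formula
\[
D(L,\phi\rho)=k-[\rho(L)\in\Bp(\phi)]+[(\phi\rho)(L)\in\Bp(\phi^{-1})].
\]

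\textbf{Case analysis.} It then remains to evaluate the two indicators geometrically. If $\rho(L)\notin\ell_{12}\cup\ell_{13}\cup\ell_{23}$, then $\rho(L)\notin\Bp(\phi)$ and $\phi$ is a local isomorphism at $\rho(L)$, so $(\phi\rho)(L)=\phi(\rho(L))$ is an honest point off the exceptional curves of $\phi^{-1}$ and hence not a base point of $\phi^{-1}$; both brackets vanish and $D(L,\phi\rho)=k$. If $\rho(L)\in\ell_{ij}\setminus\Bp(\phi)$, then again $\rho(L)\notin\Bp(\phi)$, but $\phi$ is still defined at $\rho(L)$ and contracts $\ell_{ij}$ to the base point $\phi(\ell_{ij})$ of $\phi^{-1}$, so $(\phi\rho)(L)=\phi(\ell_{ij})\in\Bp(\phi^{-1})$ and $D(L,\phi\rho)=k+1$. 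Finally, if $\rho(L)=q_i$ then the first bracket is $1$, while the second bracket is exactly the dichotomy in the statement according to whether the image $(\phi\rho)(L)$ of $L$ lies in $\Bp(\phi^{-1})$; substitution yields $k$ in the first subcase and $k-1$ in the second.

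\textbf{Main obstacle.} The delicate part is the bookkeeping of the image point of $L$ through the two triples of blowups: I must make the single-blowup fact precise, check that at each stage at most one centre equals the current image point, and get the direction right for the blowdown $\tau$, which is legitimate only because it is read as a blowup of $\Bp(\phi^{-1})$ from the target. A secondary point is the identification in the second case of $(\phi\rho)(L)$ with $\phi(\ell_{ij})$, which rests on $\phi$ being a morphism at the off-base point $\rho(L)\in\ell_{ij}$. I would also emphasise that, unlike the other two, the cases with $\rho(L)=q_i$ are \emph{not} governed by the position of $\rho(L)$ alone but by the direction in which $\rho$ contracts $L$ onto $q_i$; this is precisely why the condition is phrased through the composite image $(\phi\rho)(L)$ rather than through $\rho(L)$.
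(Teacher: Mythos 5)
Your proposal is correct and takes essentially the same route as the paper: both factor $\phi$ through the blowup of its three base points followed by the contraction of the lines $\tilde\ell_{ij}$, and track how many further blowups the image of $L$ requires on each side of this resolution, so your single-blowup fact and the resulting identity $D(L,\phi\rho)=k-[\rho(L)\in\Bp(\phi)]+[(\phi\rho)(L)\in\Bp(\phi^{-1})]$ are a formal rewriting of what the paper's figures verify case by case. The only substantive difference is organisational --- you derive all four cases from one master formula rather than checking them pictorially, and you rightly read the statement's $(\rho\phi)(L)$ as a typo for $(\phi\rho)(L)$.
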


\begin{proof}
We consider the minimal resolutions of $\phi$; in Figures~\ref{fig:02}-\ref{fig:03}, the filled black dots denote the successive images of $L$, i.e. $\rho(L),\,(\pi^{-1}\rho)(L)$ and $(\eta\pi^{-1}\rho)(L)$ respectively.\par
We argue by Figure~\ref{fig:02} and \ref{fig:01} in the case where $\rho(L)$ does not coincide with any of the base points of $\phi$. If $\rho(L)\in\ell_{ij}$ for some $i,j$, then $D(L,\phi\rho)=D(L,\rho)+1$, since $\ell_{ij}$ is contracted by $\phi$. Otherwise, the number of times $L$ is contracted does not change.
\begin{figure}[h]
\begin{tabular}{l@{\hskip .05cm}|@{\hskip .05cm}c}
\centering
\begin{minipage}{0.5\textwidth}
\centering
\def\svgwidth{0.86\textwidth}
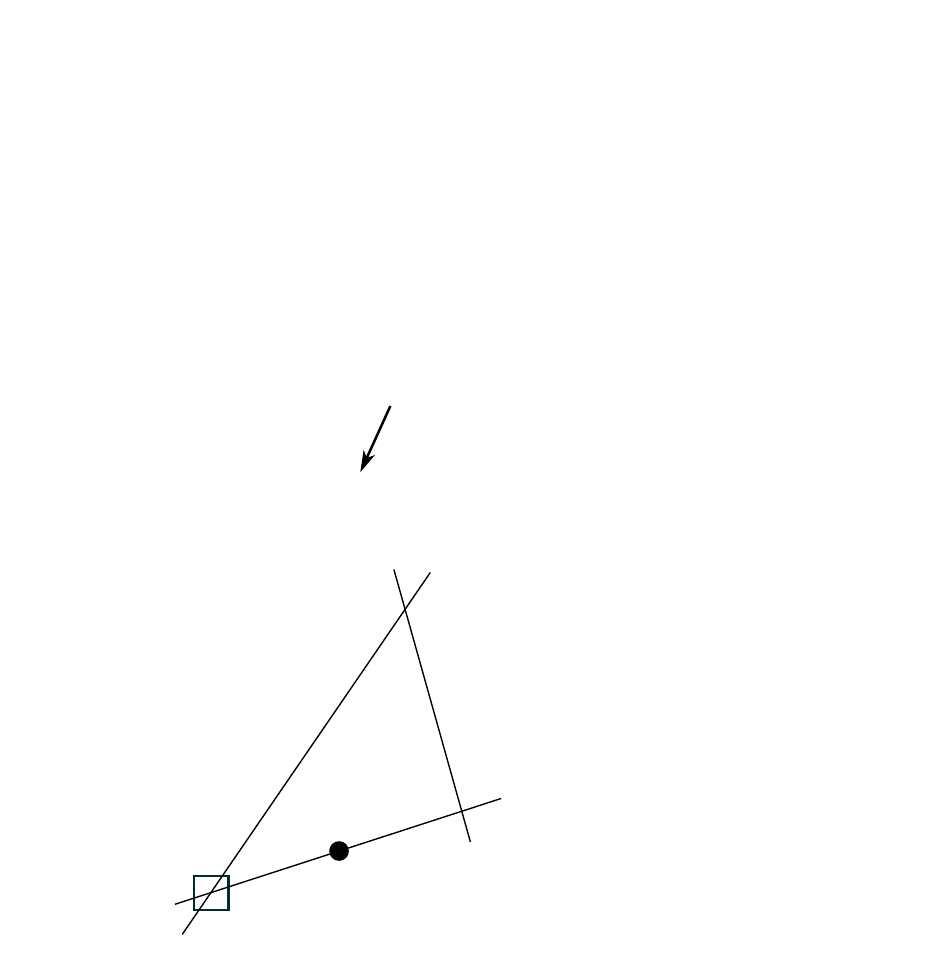
\caption{$D(L,\phi\rho)=k+1$;  $\rho(L)\in (\ell_{12}\cup\ell_{13}\cup\ell_{23})\setminus\Bp(\phi)$.}\label{fig:02}
\end{minipage}
&
\begin{minipage}{0.5\textwidth}
\centering
\def\svgwidth{0.86\columnwidth}
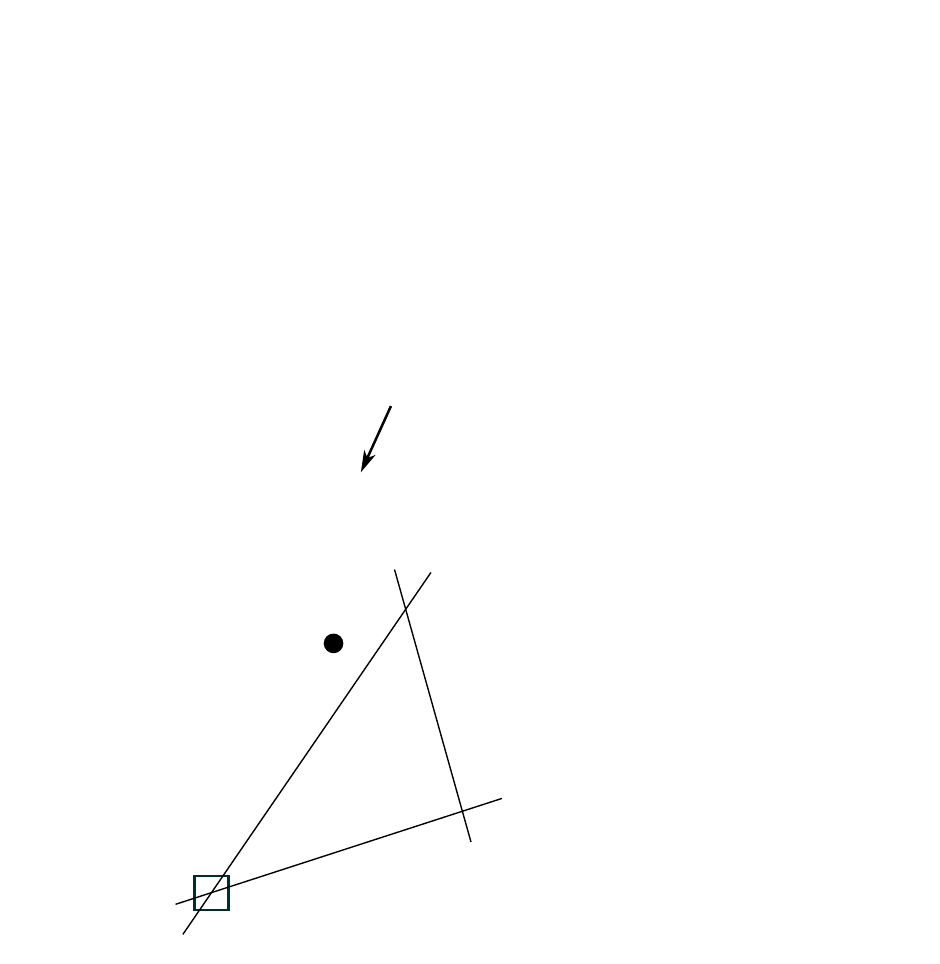
\caption{$D(L,\phi\rho)=k$; $\rho(L)\notin \ell_{12}\cup\ell_{13}\cup\ell_{23}$.}\label{fig:01}
\end{minipage}
\end{tabular}
\end{figure}
Suppose that $\rho(L)=q_i$ for some $i$. If $D(L,\rho)=1$, we have $(\pi^{-1}\rho)(L)=E_i$, and then clearly $D(L,\phi\rho)=0$ since $E_i$ is not contracted by $\eta$. If $D(L,\rho)\geq 2$ we argue by the Figures~\ref{fig:04} and \ref{fig:03}.

\begin{figure}[h]
\begin{tabular}{c@{\hskip .05cm}|@{\hskip .05cm}c}
\centering
\begin{minipage}{0.50\textwidth}
\centering\captionsetup{width=.8\linewidth}
\def\svgwidth{0.86\columnwidth}
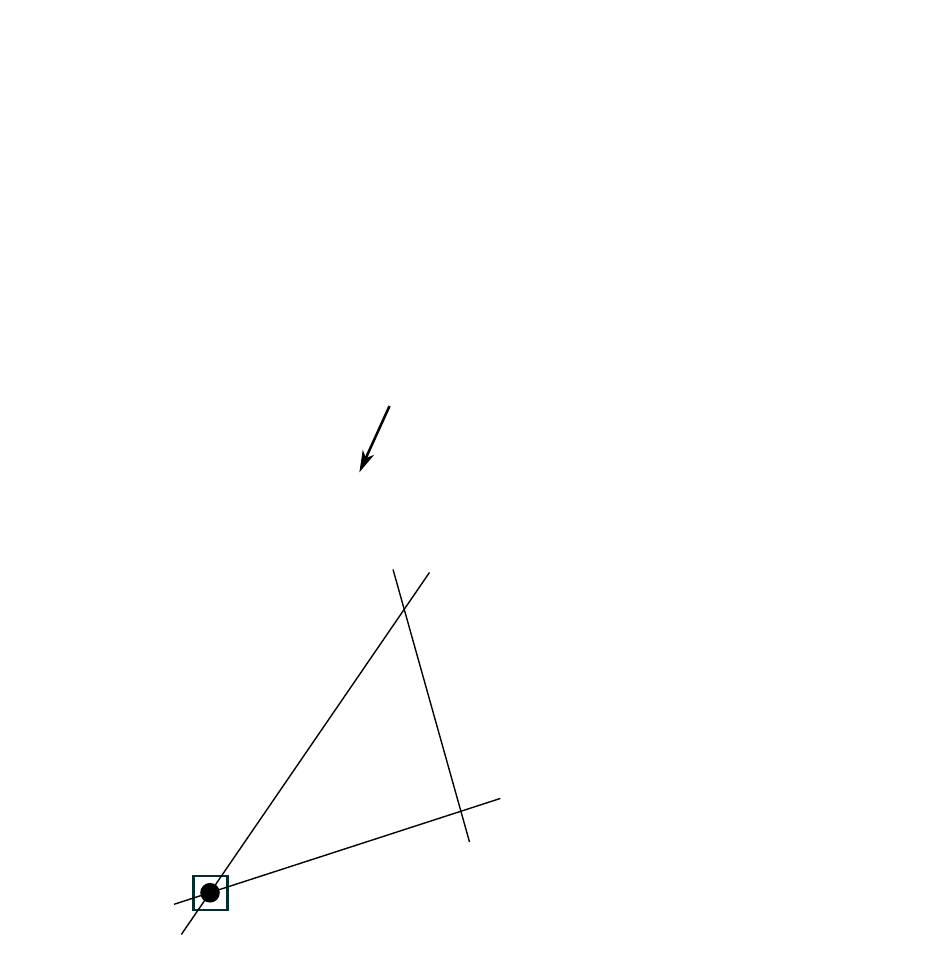
\caption{$D(L,\phi\rho)=k$;\newline $\rho(L)=q_i$ and $(\rho\phi)(L)\in\Bp(\phi^{-1})$.}\label{fig:04}
\end{minipage}
&
\begin{minipage}{0.50\textwidth}
\centering\captionsetup{width=.8\linewidth}
\def\svgwidth{0.86\textwidth}
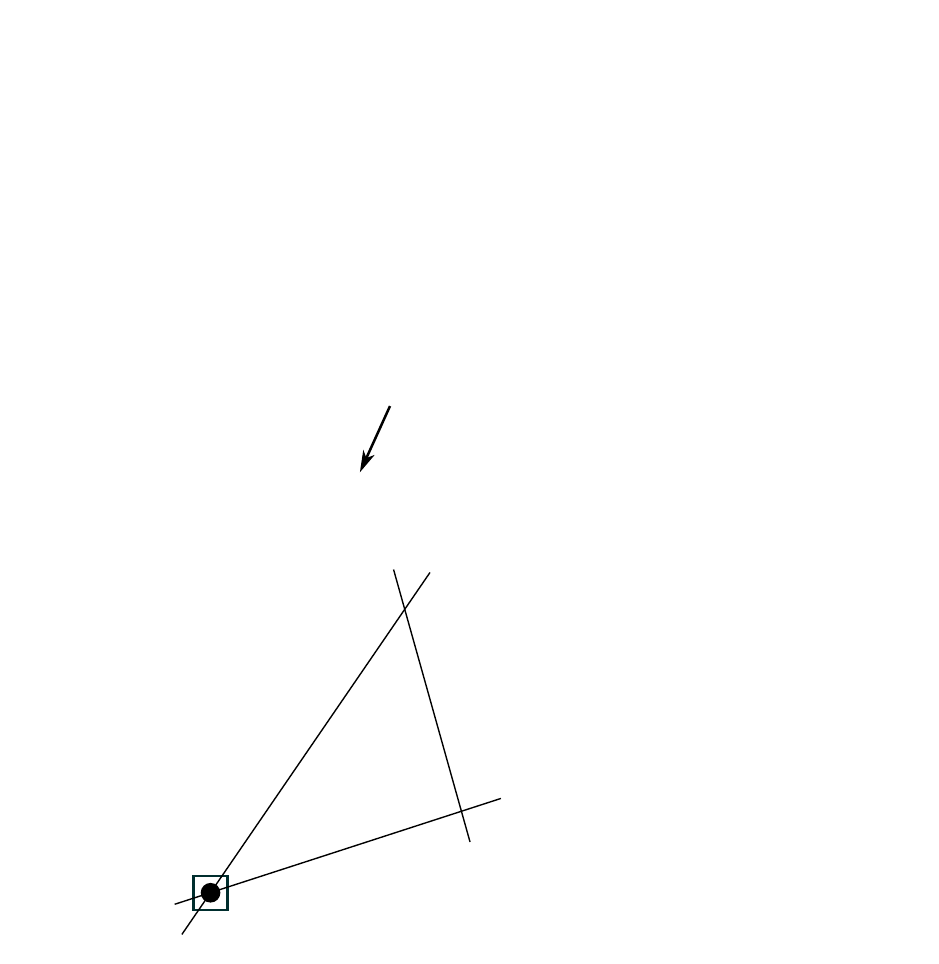
\caption{$D(L,\phi\rho)=k-1$; $\rho(L)=q_i$ and $(\rho\phi)(L)\notin\Bp(\phi^{-1})$.}\label{fig:03}
\end{minipage}
\end{tabular}
\end{figure}
\end{proof}
\begin{remark}\label{rem:Dgeq2} If $D(L,\rho)\geq 2$, then the point $(\pi^{-1}\rho)(L)$ in the first neighbourhood of $\rho(L)$ defines a tangent direction at $\rho(L)\in\p^2$. If we take $\phi$ as in Lemma~\ref{lem:4cases} with $q_i\in\Bp(\phi)$ for some $i$, then this tangent direction coincides with the direction of one of $\ell_{ij},\ell_{ik}$ if and only if $(\rho\phi)(L)\in\Bp(\phi^{-1})$.
\end{remark}
\begin{proposition}\label{Prop:Lnotcontracted}
For any given element $\rho\in\Dec(L)$, there is a decomposition of $\rho$ into quadratic maps $\rho=\rho_m\dots\rho_1$ with three proper base points such that none of the successive compositions $(\rho_i\dots\rho_1)_{i=1}^m$ contract $L$ to a point.
\end{proposition}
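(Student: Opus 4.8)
The plan is to start from an arbitrary Noether--Castelnuovo decomposition $\rho=\rho_m\cdots\rho_1$ into quadratic maps with three proper base points and to simplify it by repeated local surgeries until $L$ is contracted at no intermediate stage. Write $\tau_i=\rho_i\cdots\rho_1$, so that $\tau_0=\id$ and $\tau_m=\rho$, and set $d_i=D(L,\tau_i)$. Since $\rho\in\Dec(L)$ we have $d_0=d_m=0$. Lemma~\ref{lem:4cases} shows that for $d_{i-1}\geq1$ the value changes by $+1$, $0$ or $-1$ when passing from $\tau_{i-1}$ to $\tau_i=\rho_i\tau_{i-1}$; in the remaining case $d_{i-1}=0$ the curve $\tau_{i-1}(L)$ is contracted by $\rho_i$ only if it equals one of the three lines joining two base points of $\rho_i$, in which case $d_i=1$. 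Hence consecutive values of $(d_i)$ differ by at most $1$. I would induct on the complexity $c=\sum_{i=1}^{m-1}d_i$: the claim is that as long as $c>0$ one can replace the decomposition by another one (still into quadratic maps with three proper base points) of strictly smaller complexity, which forces $c=0$ and is exactly the statement of Proposition~\ref{Prop:Lnotcontracted}.

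Suppose $c>0$. Then $(d_i)$ attains a maximum value $\geq1$; choose an index $i$ realising a local maximum, so that there is an ascent at step $i$ and a descent at the step where the value next drops. For the exposition assume the simplest peak, $d_{i-1}<d_i>d_{i+1}$. Reading Lemma~\ref{lem:4cases} at the ascent (or the $d_{i-1}=0$ discussion above) shows that $\tau_{i-1}(L)$ lies on one of the contracted lines $\ell_{ab}$ of $\rho_i$, so that its image $\tau_i(L)=\rho_i(\tau_{i-1}(L))$ is precisely the base point $q'$ of $\rho_i^{-1}$ onto which $\ell_{ab}$ is contracted. Reading the lemma at the descent (case four) shows that this same point $q'=\tau_i(L)$ is a base point of $\rho_{i+1}$. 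Thus $\rho_i^{-1}$ and $\rho_{i+1}$ have $q'$ as a common base point.

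The decisive move is to absorb $\rho_{i+1}\rho_i$ into fewer or simpler quadratic maps. If $\rho_i^{-1}$ and $\rho_{i+1}$ share exactly two proper base points, then Lemma~\ref{lem:connecting} applies directly: $\mu:=\rho_{i+1}\rho_i$ is quadratic with three proper base points, and replacing the pair $\rho_{i+1},\rho_i$ by $\mu$ simply deletes the index $i$ from the sequence, because the partial composition $\mu\tau_{i-1}$ equals the old $\tau_{i+1}$. This lowers $c$ by $d_i\geq1$, contradicting minimality. Iterating this surgery drives $c$ to $0$.

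The main obstacle is the case in which $\rho_i^{-1}$ and $\rho_{i+1}$ share only the single base point $q'$: then $\rho_{i+1}\rho_i$ has degree three and cannot be merged into one quadratic map. Here I would interpolate an auxiliary quadratic map $\psi$ and write $\rho_{i+1}\rho_i=(\rho_{i+1}\psi^{-1})(\psi\rho_i)$, choosing the base points of $\psi$ to be $q'$ together with one further base point of $\rho_i^{-1}$ and one of $\rho_{i+1}$, so that by Lemma~\ref{lem:connecting} both factors $\psi\rho_i$ and $\rho_{i+1}\psi^{-1}$ are again quadratic; since $q'$ is a base point of $\psi$, the new intermediate value $D(L,\psi\tau_i)$ drops below $d_i$, again lowering $c$. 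The genuine difficulties, and where the real work lies, are: ensuring that the auxiliary maps retain three \emph{proper} base points, which forces one to track infinitely near points and to use Remark~\ref{rem:Dgeq2} to decide exactly when a tangent direction of $\tau_i(L)$ produces a base point of the inverse map; handling the plateaus created by cases two and three of Lemma~\ref{lem:4cases}, where the peak is attained on several consecutive indices and the ascent and descent maps are not adjacent; and checking in each subcase that $c$ strictly decreases rather than merely staying constant.
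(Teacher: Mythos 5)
Your overall strategy --- locate a peak of the function $i\mapsto D(L,\rho_i\cdots\rho_1)$, use Lemma~\ref{lem:4cases} to show that $\rho_i^{-1}$ and $\rho_{i+1}$ share the base point $q'=\tau_i(L)$, and then perform a local surgery on the pair $(\rho_i,\rho_{i+1})$ --- is the same as the paper's (which works at the \emph{last} index attaining the maximum). But two of the issues you defer as ``difficulties'' are genuine gaps, and they are precisely where the paper's work lies. First, propriety of base points. Lemma~\ref{lem:connecting} does \emph{not} ``apply directly'' to give three proper base points: it gives a quadratic map, whose base points are proper if and only if the third base point of $\rho_{i+1}$ avoids the three lines joining the base points of $\rho_i^{-1}$. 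Since your auxiliary map $\psi$ is built entirely from \emph{existing} base points of $\rho_i^{-1}$ and $\rho_{i+1}$, nothing rules out such incidences; when they occur, the factors $\psi\rho_i$ or $\rho_{i+1}\psi^{-1}$ acquire infinitely near base points and the decomposition is no longer of the required form. The paper's fix is a key idea, not bookkeeping: it routes the surgery through \emph{general} points $r,s\in\p^2$, using chains of maps such as $[p_1,p_2,r]$, $[p_1,q_2,r]$, so that consecutive auxiliary maps share two base points while the third is in general position. Relatedly, your justification of the drop ``since $q'$ is a base point of $\psi$, the value $D(L,\psi\tau_i)$ drops'' is insufficient: Case 3 of Lemma~\ref{lem:4cases} shows that a quadratic map with a base point at $\tau_i(L)$ need not decrease $D$; one must check, via Remark~\ref{rem:Dgeq2}, that the tangent direction defined by $L$ at $q'$ differs from the lines joining $q'$ to the other two base points of $\psi$ (for a strict peak this can in fact be verified, using the ascent for the $\rho_i^{-1}$-point and the descent for the $\rho_{i+1}$-point, but you do not do it).

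Second, and more seriously, the plateau case is missing, and it does not follow from the strict-peak case. When the maximum is attained on several consecutive indices --- which happens exactly when the steps inside the plateau fall under Cases 2 or 3 of Lemma~\ref{lem:4cases} --- the map $\rho_i^{-1}$ need not have any base point at $\tau_i(L)$ (in Case 2 the point $\tau_i(L)$ is not even on the lines of the triangle of $\rho_i^{-1}$), so the shared-base-point structure your surgery relies on is simply absent. Handling this is the harder half of the paper's proof: at the last index $n$ attaining the maximum it distinguishes whether the step into $n$ was Case 2 or Case 3, chooses points according to tangent directions, and interpolates chains of four or five auxiliary quadratic maps. Note finally that your induction measure $c=\sum_i d_i$ would not survive such a surgery: replacing two maps by five inserts several new intermediate values equal to $d-1$ and can \emph{increase} the sum when $d>2$. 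The paper instead inducts on the lexicographically ordered pair (maximal value $d$, number of indices attaining $d$), which is the measure you would need here as well.
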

\begin{proof}
Let $\rho=\rho_m\dots\rho_1$ be a decomposition of $\rho$ into quadratic maps with only proper base points. We can assume that $d:=\max\{D(L,\rho_j\dots\rho_1)\,\,|\,\,1\leq j\leq m\}>0$, otherwise we are done. Let $n:=\max\{j\,\,|\,\,D(L,\rho_j\dots\rho_1)=d\}$. We denote the base points of $\rho_n^{-1}$ and $\rho_{n+1}$ by $p_1,p_2,p_3$ and $q_1,q_2,q_3$ respectively.

We first look at the case where $D(L,\rho_{n-1}\dots\rho_1)=D(L,\rho_{n+1}\dots\rho_1)=d-1$. Then composition with $\rho_n$ and $\rho_{n+1}$ fall under Cases 1 and 4 of Lemma~\ref{lem:4cases}, so both $\rho_n^{-1}$ and $\rho_{n+1}$ have a base point at $(\rho_n\dots\rho_1)(L)\in\p^2$. We may assume that this point is $p_1=q_1$, as in Figure~\ref{fig:05}. Interchanging the roles of $q_2$ and $q_3$ if necessary, we may assume that $p_1,p_2,q_2$ are not collinear. Let $r\in\p^2$ be a general point, and let $c_1$ and $c_2$ denote quadratic maps with base points $[p_1,p_2,r]$ and $[p_1,q_2,r]$ respectively; then the maps $\tau_1,\tau_2,\tau_3$ (defined by the commutative diagram in Figure~\ref{fig:05}) are quadratic with three proper base points in $\PP^2$. Note that $D(L,\tau_i\dots\tau_1\rho_{n-1}\dots\rho_1)=d-1$ for $i=1,2,3$. Thus we obtained a new decomposition of $\rho$ into quadratic maps with three proper base points $$\rho=\rho_m\dots\rho_{n+2}\tau_3\tau_2\tau_1\rho_{n-1}\dots\rho_1,$$ where the number of instances where $L$ is contracted $d$ times has decreased by 1.

\begin{figure}[h]
\begin{minipage}{0.95\textwidth}
\centering
\def\svgwidth{0.95\columnwidth}
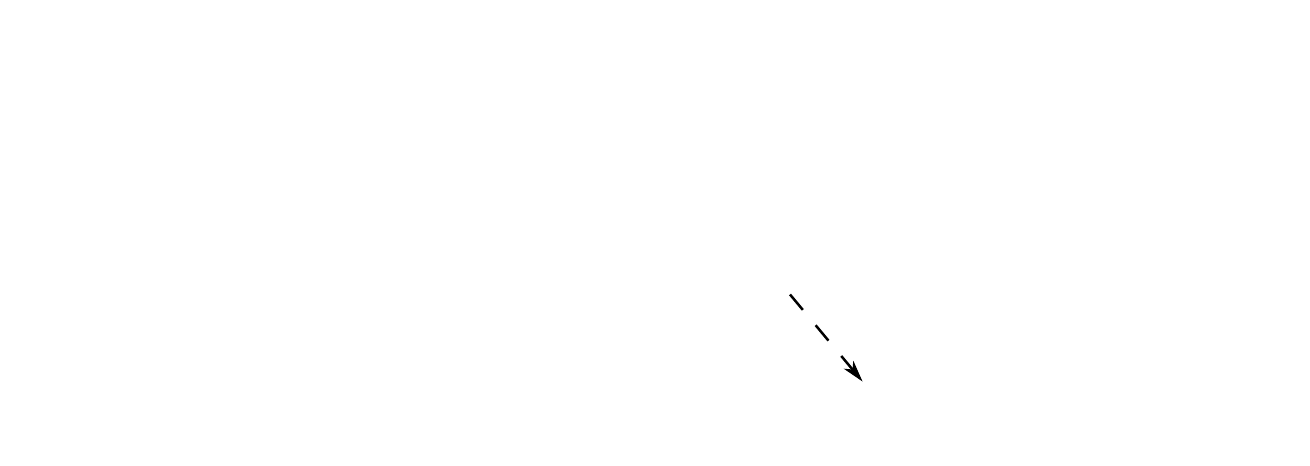
\caption{The decomposition of $\rho_{n+1}\rho_n$ into quadratic maps $\tau_1,\tau_2,\tau_3$}\label{fig:05}
\end{minipage}
\end{figure}
Now assume instead that $D(L,\rho_{n-1}\dots\rho_1)=d$ and $D(L,\rho_{n+1}\dots\rho_1)=d-1$. Then composition with $\rho_{n+1}$ falls under Case 4 of Lemma~\ref{lem:4cases}, so $(\rho_n\dots\rho_1)(L)$ is a base point of $\rho_{n+1}$, which we may assume to be $q_1$. Furthermore composition with $\rho_n$ falls under Cases 2 or 3 of Lemma~\ref{lem:4cases}, so $(\rho_n\dots\rho_1)(L)$ either does not lie on a line joining two base points of $\rho_n^{-1}$, or $D(L,\rho_n\dots\rho_1)\geq 2$ and $(\rho_n\dots\rho_1)(L)$ is a base point of $\rho_n^{-1}$ (which we may assume to be $p_1$, and equal to $q_1$), at the same time as $(\rho_{n-1}\dots\rho_1)(L)$ is a base point of $\rho_n$.\par

We consider the first case. If $D(L,\rho_n\dots\rho_1)\geq 2$ so that $L$ defines a tangent direction at $(\rho_n\dots\rho_1)(L)$, then this tangent direction has to be different from at least two of the three directions at $q_1$ that are defined by the lines through $q_1$ and the $p_i,\,i=1,2,3$. By renumbering the $p_i$, we may assume that $p_2,p_3$ define these two directions (no renumbering is needed if $D(L,\rho_n\dots,\rho_1)=1$). Then with a quadratic map $c_1:=[q_1,p_2,p_3]$ with base points $q_1,p_2,p_3$, we are in Case 4 of Lemma~\ref{lem:4cases} and obtain $D(L,c_1\rho_n\dots\rho_1)=D(L,\rho_n\dots\rho_1)-1$. Let $r,s\in\p^2$ be two general points and define $c_2,c_3,c_4$ with three proper base points respectively as $[q_1,r,p_3],\,[q_1,r,s],\,[q_1,q_2,s]$. Note that the corresponding maps $\tau_1,\dots,\tau_5$, defined in an analogous way as in Figure~\ref{fig:05}, are quadratic with three proper base points. Note also that $D(L,c_i\rho_n\dots\rho_1)=D(L,\rho_n\dots\rho_1)-1$ for $i=2,3,4$. Only for $i=4$ this is not immediately clear, so suppose that this is not the case, i.e. $D(L,c_4\rho_n\dots\rho_1)=D(L,\rho_n\dots\rho_1)$. It follows that $D(L,\rho_n\dots\rho_1)\geq 2$ and that the tangent direction corresponding to $(\rho_n\dots\rho_1)(L)$ is given by the line through $q_1$ and $q_2$, but this is not possible by the assumption that $D(L,\rho_{n+1}\dots\rho_1)=d-1$.\par

In the second case we have $p_1=q_1$ and the tangent direction at $p_1=q_1$ corresponding to $(\rho_n\dots\rho_1)(L)$ is the direction either of the line through $p_1$ and $p_2$ or the line through $p_1$ and $p_3$ (see Figure~\ref{fig:04}). By interchanging the roles of $p_2$ and $p_3$ if necessary, we may assume that it corresponds to the direction of the line through $p_1$ and $p_3$. Interchanging the roles of $q_2$ and $q_3$ if necessary, we may assume that $p_1,q_2,p_3$ are not collinear. Let $r,s\in\p^2$ be general points and define quadratic maps $c_1,c_2,c_3$ with three proper base points respectively by $[p_1,p_2,s],\, [p_1,r,s],\, [p_1,r,q_2]$. Then the corresponding maps $\tau_1,\tau_2,\tau_3,\tau_4$ are quadratic with three proper base points and $D(L,c_i\rho_n\dots\rho_1)=D(L,\rho_n\dots\rho_1)-1$ for $i=1,2,3$. The latter holds for $c_1$ since the direction given by $p_1$ and $p_2$ is different from the tangent direction corresponding to $(\rho_n\dots\rho_1)(L)$, and for $c_3$ it follows from the assumption that the image of $L$ is contracted $d-1$ times by $(\rho_{n+1}\dots\rho_1)$ and that $p_1,q_2,p_3$ are not collinear.\par
Both in the first and second case, we again arrive at a new decomposition into quadratic maps with three proper base points $$\rho=\rho_m\dots\rho_{n+2}\tau_j\dots\tau_1\rho_{n-1}\dots\rho_1\qquad( j\in\{4,5\}),$$ where the number of instances where $L$ is contracted $d$ times has decreased by 1, and we conclude by induction.
\end{proof}

\section{Avoiding to send $L$ to a curve of degree higher than 1.}\label{sec:onlylines}

By Proposition~\ref{Prop:Lnotcontracted}, any element $\rho\in\Dec(L)$ can be decomposed as $$\rho=\rho_m\dots\rho_1$$ where each $\rho_j$ is quadratic with three proper base points, and all of the successive images $((\rho_i\dots\rho_1)(L))_{i=1}^m$ of $L$ are curves. The aim of this section is to show that the $\rho_j$ even can be chosen so that all of these curves have degree 1. That is, we find a decomposition of $\rho$ into quadratic maps such that all the successive images of $L$ are lines. This means in particular that $\Dec(L)$ is generated by its elements of degree 1 and 2.

\begin{definition}
A birational transformation of $\p^2$ is called de Jonqui\`eres if it preserves the pencil of lines passing through $[1:0:0]\in\p^2$. These transformations form a subgroup of $\Bir(\p^2)$ which we denote by $\mathcal{J}$.
\end{definition}

\begin{remark}\label{Rem:dejonq}
In \cite{alberich}, a de Jonqui\`eres map is defined by the slightly less restrictive property that it sends a pencil of lines to a pencil of lines. Given a map with this property, we can always obtain an element in $\mathcal J$ by composing from left and right with elements of $\PGL_3$.
\end{remark}

For a curve $C\subset\PP^2$ and a point $p$ in $\PP^2$ or infinitely near, we denote by $m_C(p)$ the multiplicity of $C$ in $p$. If it is clear from context which curve we are referring to, we will use the notation $m(p)$.

\begin{lemma}\label{Lem:2points}
Let $\phi\in\mathcal{J}$ be of degree $e\geq 2$, and $C\subset\p^2$ a curve of degree $d$. Suppose that 
\[\deg(\phi(C))\leq d.\]
Then there exist two base points $q_1,q_2$ of $\phi$ different from $[1:0:0]$ such that
\[m_C([1:0:0])+m_C(q_1)+m_C(q_2)\geq d.\quad\]
\end{lemma}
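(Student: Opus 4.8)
The plan is to reduce the statement to an elementary counting argument resting on two standard facts about de Jonqui\`eres maps. First I would recall the homaloidal type of $\phi$: a de Jonqui\`eres transformation of degree $e\geq 2$ has characteristic $(e;\,e-1,1^{2e-2})$, i.e.\ its homaloidal net consists of curves of degree $e$ having multiplicity $e-1$ at $p_0:=[1:0:0]$ together with exactly $2e-2$ further \emph{simple} base points $q_1,\dots,q_{2e-2}$ (proper or infinitely near), all different from $p_0$. This is the standard description of de Jonqui\`eres maps (see e.g.\ \cite[\S8]{alberich}); it is also consistent with Noether's equations $\sum_j m_j=3(e-1)$ and $\sum_j m_j^2=e^2-1$, which force the number of simple points to be $2(e-1)$, and with the fact that a general line of the preserved pencil through $p_0$ must be sent to a line, pinning the multiplicity at $p_0$ to be $e-1$.

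Next I would apply the degree formula for the image of a curve under a Cremona map. For $C$ of degree $d$ this reads
\[
\deg(\phi(C)) \;=\; e\,d \;-\; (e-1)\,m_C(p_0)\;-\;\sum_{i=1}^{2e-2} m_C(q_i),
\]
where the multiplicities at infinitely near base points are those of the successive strict transforms of $C$. Feeding in the hypothesis $\deg(\phi(C))\leq d$ and rearranging gives
\[
\sum_{i=1}^{2e-2} m_C(q_i) \;\geq\; (e-1)\bigl(d-m_C(p_0)\bigr).
\]

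The conclusion is then a pigeonhole argument. Setting $M:=d-m_C(p_0)$ and ordering the simple base points so that $m_C(q_1)\geq m_C(q_2)\geq\cdots\geq m_C(q_{2e-2})$, I would group them into the $e-1$ consecutive pairs $(q_1,q_2),(q_3,q_4),\dots,(q_{2e-3},q_{2e-2})$. Since $m_C(q_1)+m_C(q_2)$ dominates every pair-sum, summing over the $e-1$ pairs yields $(e-1)\bigl(m_C(q_1)+m_C(q_2)\bigr)\geq\sum_{i} m_C(q_i)\geq (e-1)M$, hence $m_C(q_1)+m_C(q_2)\geq M$ after dividing by $e-1>0$. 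Adding $m_C(p_0)$ gives $m_C(p_0)+m_C(q_1)+m_C(q_2)\geq d$, so the two largest-multiplicity base points are the required $q_1,q_2$.

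The only genuinely non-trivial input is the first step: pinning down that $\phi$ has multiplicity exactly $e-1$ at $p_0$ and exactly $2e-2$ simple base points, together with the correct bookkeeping of the degree formula when some $q_i$ are infinitely near $p_0$ or near one another. Once these are in hand, the rest is purely numerical and the pigeonhole is elementary; in the boundary case $e=2$ there is a single pair $\{q_1,q_2\}$ and the desired inequality holds directly.
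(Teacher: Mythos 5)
Your proposal is correct and follows essentially the same route as the paper: both use the de Jonqui\`eres characteristic $(e;\,e-1,1^{2e-2})$, the degree formula $\deg(\phi(C))=ed-(e-1)m_C(p_0)-\sum_i m_C(q_i)$, and a pigeonhole argument on the $e-1$ pairs of simple base points. The only cosmetic difference is that you order the simple points by multiplicity and take the top two, whereas the paper pairs them in a fixed arbitrary order and concludes that some pair-sum term must be nonpositive; the two conclusions are interchangeable.
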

This inequality can be made strict in case $\deg(\phi(C))< d$, with a completely analogous proof. 
\begin{proof}
Since $\varphi\in\mathcal{J}$ is of degree $e$, it has exactly $2e-1$ base points $r_0:=[1:0:0],r_1,\dots,r_{2e-2}$ of multiplicity $e-1,1,\dots,1$ respectively. 
Then
\begin{align*}d\geq\deg(\phi(C))=&ed-(e-1)m_C(r_0)-\sum_{i=1}^{e-1}(m_C(r_{2i-1})+m_C(r_{2i}))\\
=&d+\sum_{i=1}^{e-1}(d-m_C(r_0)-m_C(r_{2i-1})-m_C(r_{2i}))
\end{align*}
Hence there exist $i_0$ such that $d\leq m_C(r_0)+m_C(r_{2i_0-1})+m_C(r_{2i_0})$.
\end{proof}
\begin{remark}\label{rmk:choiceofpoints}
Note also that we can choose the points $q_1,q_2$ such that $q_1$ either is a proper point in $\PP^2$ or in the first neighbourhood of $[1:0:0]$, and that $q_2$ either is proper point of $\PP^2$ or is in the first neighbourhood of $[1:0:0]$ or $q_1$. 
\end{remark}

\begin{remark}\label{Rem:decompo} A quadratic map sends a pencil of lines through one of its base points to a pencil of lines, and we conclude from Proposition~\ref{Prop:Lnotcontracted} and Remark~\ref{Rem:dejonq} that there exists maps $\alpha_1,\dots,\alpha_{m+1}\in\PGL_3$ and $\rho_i\in\mathcal J\setminus\PGL_3$ such that $$\rho=\alpha_{m+1}\rho_m\alpha_m\rho_{m-1}\alpha_{m-1}\dots\alpha_2\rho_1\alpha_1$$ and such that all of the successive images of $L$ with respect to this decomposition are curves.
\end{remark}
The following proposition is an analogue of the classical Castelnuovo's Theorem stating that any map in $\Bir(\p^2)$ is a product of de Jonqui\`eres maps. 

\begin{proposition}\label{prop:djdecomp}
Let $\rho\in\Dec(L)$. Then there exists $\rho_i\in\mathcal J\setminus\PGL_3$ and $\alpha_i\in\PGL_3$ such that 
$\rho=\alpha_{m+1}\rho_m\alpha_m\rho_{m-1}\alpha_{m-1}\dots\alpha_2\rho_1\alpha_1$ and all of the successive images of $L$ are lines.
\end{proposition}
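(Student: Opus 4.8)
The plan is to start from the de Jonquières decomposition of Remark~\ref{Rem:decompo}, in which $\rho=\alpha_{m+1}\rho_m\alpha_m\cdots\alpha_2\rho_1\alpha_1$ with $\rho_i\in\mathcal J\setminus\PGL_3$, $\alpha_i\in\PGL_3$ and all successive images of $L$ are curves, and then to lower the degrees of these images by induction. Observe first that every successive image is irreducible, being the birational image of the irreducible curve $L$. Since the $\alpha_i$ preserve degrees, the degree of the successive image only changes across the maps $\rho_i$, so it is governed by the sequence $1=e_0,e_1,\dots,e_m=1$, where $e_k$ is the common degree of the two images flanking $\rho_k$. I set $d:=\max_k e_k$ and argue by induction on the pair $(d,\ \#\{k:e_k=d\})$ ordered lexicographically; if $d=1$ there is nothing to prove.

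Suppose $d\geq 2$ and let $K$ be the largest index with $e_K=d$, so that $e_{K+1}<d$. Let $C$ be the successive image of $L$ sitting immediately to the left of $\rho_{K+1}$; it has degree $d$ and $\deg(\rho_{K+1}(C))=e_{K+1}<d$. Since $\rho_{K+1}\in\mathcal J$, the strict form of Lemma~\ref{Lem:2points} applied to $\phi=\rho_{K+1}$ yields two base points $q_1,q_2\neq[1:0:0]$ of $\rho_{K+1}$ with $m_C([1:0:0])+m_C(q_1)+m_C(q_2)\geq d+1$, and by Remark~\ref{rmk:choiceofpoints} these may be chosen so that the quadratic map $\tau$ with base points $[1:0:0],q_1,q_2$ is a genuine quadratic transformation lying in $\mathcal J$. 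As $C$ is irreducible of degree $d\geq2$ it is not one of the three lines contracted by $\tau$, so $\tau(C)$ is again an irreducible curve, of degree $\deg(\tau(C))=2d-\big(m_C([1:0:0])+m_C(q_1)+m_C(q_2)\big)\leq d-1$.

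I now insert $\tau$ into the chain by conjugating at the two images of degree $d$ flanking $\rho_{K+1}$: on the right, $\rho_{K+1}$ is replaced by $\rho_{K+1}\tau^{-1}$, which stays in $\mathcal J$ because $\tau$ and $\rho_{K+1}$ share the base point $[1:0:0]$; on the left, the intervening linear map $\alpha_{K+1}$ is used to transport $\tau$ to the paired image of degree $d$ (conjugating it by $\alpha_{K+1}^{-1}\tau\alpha_{K+1}$), so that the linear map between the two affected images is preserved and both are replaced by curves of degree $\leq d-1$. The composite $\rho$ is unchanged, while the value $e_K=d$ has dropped below $d$; re‑expressing the modified factors as de Jonquières maps via Remark~\ref{Rem:dejonq} and absorbing any linear factors into the $\alpha_i$, this strictly decreases $(d,\#\{k:e_k=d\})$ and the induction closes. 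Throughout, Proposition~\ref{Prop:Lnotcontracted} guarantees that $L$ is never contracted, so all successive images remain curves.

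I expect the main obstacle to be precisely the bookkeeping of this conjugation. The right‑hand, de Jonquières side is automatic, but on the left‑hand side the conjugated quadratic must be composed with the neighbouring map $\rho_K$, and one must check that the resulting map can be written as de Jonquières maps (up to linear factors) \emph{without} producing any new successive image of degree $\geq d$. When $e_{K-1}<d$ this forces one to decompose a composite of two quadratics while keeping every intermediate image of degree $\leq d-1$; the way to arrange this is to apply Lemma~\ref{Lem:2points} also to $\rho_K^{-1}$ (which strictly lowers the degree of the left image of degree $d$), obtaining high‑multiplicity base points on that side as well, and then to choose the transported quadratic compatibly, using the freedom of Remark~\ref{rmk:choiceofpoints} in placing $q_1,q_2$. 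Equivalently, one peels off the plateau $\{k:e_k=d\}$ one factor at a time from the right; verifying that each such step is compatible on both sides, and hence genuinely reduces the lexicographic complexity, is the crux of the argument.
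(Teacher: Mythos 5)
Your overall strategy coincides with the paper's (start from Remark~\ref{Rem:decompo}, induct on the maximal degree $d$ of the successive images, and use the strict form of Lemma~\ref{Lem:2points} to produce a quadratic map that lowers that degree), but the insertion mechanism has a genuine gap, and it is exactly the step you dismiss as ``bookkeeping'': that step is where the whole proof lives. Your $\tau$, with base points $[1:0:0],q_1,q_2$ taken from $\rho_{K+1}$, works fine on the right, since $\rho_{K+1}\tau^{-1}\in\mathcal{J}$. The problem is the left factor. In the only nontrivial case $\alpha_{K+1}\notin\mathcal{J}$ (if $\alpha_{K+1}\in\mathcal{J}$ one simply merges $\rho_{K+1}\alpha_{K+1}\rho_K$ into one de Jonqui\`eres factor and the measure drops), the conjugate $\hat\tau:=\alpha_{K+1}^{-1}\tau\alpha_{K+1}$ has no base point at $[1:0:0]$. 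For $\deg\rho_K\geq 3$ the only pencil of lines that $\rho_K$ sends to a pencil of lines is the one through $[1:0:0]$ (a general line through a simple base point has image of degree $\deg\rho_K-1\geq 2$), and $\hat\tau$ sends that pencil to a pencil of conics; hence $\hat\tau\rho_K$ sends \emph{no} pencil of lines to a pencil of lines, so it cannot be written as $\beta_1\rho'\beta_2$ with $\rho'\in\mathcal{J}$, $\beta_i\in\PGL_3$, and Remark~\ref{Rem:dejonq} does not apply to it. The alternative --- keeping $\hat\tau$ as a separate factor, $\hat\tau=\beta_1\tau''\beta_2$ with $\tau''\in\mathcal{J}$ --- leaves $C_K$, of degree $d$, as a successive image right after $\rho_K$, so $\#\{k: e_k=d\}$ does not decrease and your induction makes no progress.

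The paper closes precisely this gap by a different choice of the inserted quadratic: it must pass through \emph{both} de Jonqui\`eres centres, $p_0=[1:0:0]$ and $q_0=\alpha_{n+1}^{-1}(p_0)$ (equivalently, on your side of $\alpha_{K+1}$, through $[1:0:0]$ and $\alpha_{K+1}([1:0:0])$); this is what makes both new factors de Jonqui\`eres after linear corrections, but it leaves only \emph{one} free base point $r$. The degree drop then requires $m(p_0)+m(q_0)+m(r)>d$, which does not follow from either application of Lemma~\ref{Lem:2points} alone; the paper extracts it from both applications (to $\rho_n^{-1}$ and to $\rho_{n+1}$) through a case analysis on multiplicities ($m(p_0)\geq m(p_1)$ and $m(q_0)\geq m(q_1)$, or not), and when these inequalities fail it first performs linear ``swaps'' exchanging $p_0\leftrightarrow p_1$ (resp.\ the $q$'s). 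Those preparatory swaps keep the induction measure constant only because the paper measures the plateau by the weighted sum $k=\sum_{i\leq n}(\deg\rho_i-1)$ rather than by your count: a swap can create a new intermediate image of degree exactly $d$, which would strictly increase $\#\{k:e_k=d\}$. Your closing suggestion (apply Lemma~\ref{Lem:2points} to $\rho_K^{-1}$ as well and choose the quadratic ``compatibly'') points in the right direction, but without the two-centre constraint and the resulting multiplicity analysis it restates the problem rather than solving it.
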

\begin{proof}
Start with a decomposition $\rho=\alpha_{m+1}\rho_m\alpha_m\rho_{m-1}\alpha_{m-1}\dots\alpha_2\rho_1\alpha_1$ as in Remark~\ref{Rem:decompo}.

Denote $C_i:=(\rho_i\alpha_i\cdots \rho_1\alpha_1)(L)\subset\p^2$, $d_i:=\deg(C_i)$ and let 
\[D:=\max\{d_i\mid i=1,\dots,m\},\quad n:=\max\{i\mid D=d_i\},\quad k:=\sum_{i=1}^n(\deg \rho_i-1).\]
We use induction on the lexicographically ordered pair $(D,k)$.

We may assume that $D>1$, otherwise our goal is already achieved.
We may also assume that $\alpha_{n+1}\notin\mathcal J$, otherwise the pair $(D,k)$ decreases as we replace the three maps $\rho_{n+1},\alpha_{n+1},\rho_n$ by their composition $\rho_{n+1}\alpha_{n+1}\rho_n\in\mathcal J$. Indeed, either $D$ decreases, or $D$ stays the same while $k$ decreases at least by $\deg\rho_n-1$. 
Using Lemma~\ref{Lem:2points}, we find simple base points $p_1,p_2$ of $\rho_n^{-1}$ and simple base points $\tilde q_1,\tilde q_2$ of $\rho_{n+1}$, all different from $p_0:=[1:0:0]$, such that $$m_{C_n}(p_0)+m_{C_n}(p_1)+m_{C_n}(p_2)\geq D$$ and $$m_{\alpha_{n+1}(C_n)}(p_0)+m_{\alpha_{n+1}(C_n)}(\tilde{q}_1)+m_{\alpha_{n+1}(C_n)}(\tilde{q}_2)>D.$$
We choose $p_1,p_2,\tilde{q}_1,\tilde{q_2}$ as in Remark~\ref{rmk:choiceofpoints}.
By slight abuse of notation, we denote by $q_0=\alpha^{-1}_{n+1}(p_0),\,q_1=\alpha^{-1}_{n+1}(\tilde q_1)$ and $q_2=\alpha^{-1}_{n+1}(\tilde q_2)$ respectively the (proper or infinitely near) points in $\p^2$ that correspond to $p_0,\tilde q_1$, and $\tilde q_2$ under the isomorphism $\alpha_{n+1}^{-1}$. Note that $p_0$ and $q_0$ are two distinct points of $\p^2$ since $\alpha_{n+1}\notin \mathcal J$. We number the points so that $m(p_1)\geq m(p_2)$, $m(\tilde q_1)\geq m(\tilde q_2)$ and so that if $p_i$ (resp. $\tilde q_i$) is infinitely near $p_j$ (resp. $\tilde q_j$), then $j<i$.\par

We study two cases separately depending on the multiplicities of the base points.\par
Case (a): $m(q_0)\geq m(q_1)$ and $m(p_0)\geq m(p_1)$. Then we find two quadratic maps $\tau',\tau\in\mathcal J$ and $\beta\in\PGL_3$ so that $\rho_{n+1}\alpha_{n+1}\rho_n=(\rho_{n+1}\tau^{-1})\beta (\tau\rho_n)$ and so that the pair $(D,k)$ is reduced as we replace the sequence $(\rho_{n+1},\alpha_{n+1},\rho_n)$ by $(\rho_{n+1}\tau^{-1},\beta, \tau\rho_n)$. The procedure goes as follows.\par
If possible we choose a point $r\in\{p_1,q_1\}\setminus\{p_0,q_0\}$. Should this set be empty, i.e. $p_0=q_1$ and $p_1=q_0$, we choose $r=q_2$ instead. The ordering of the points implies that the point $r$ is either a proper point in $\PP^2$ or in the first neighbourhood of $p_0$ or $q_0$. Furthermore, the assumption implies that $m(p_0)+m(q_0)+m(r)>D$, so $r$ is not on the line passing through $p_0$ and $q_0$. In particular, there exists a quadratic map $\tau\in\mathcal J$ with base points $p_0,q_0,r$; then $$\deg(\tau(C_n))=2D-m(p_0)-m(q_0)-m(r)<D.$$ Choose $\beta\in\mathrm{PGL}_3$ so that the quadratic map $\tau':=\beta\tau(\alpha_{n+1})^{-1}$ in the below commutative diagram is de Jonqui\`eres -- this is possible since $\tau$ has $q_0$ as a base point. This decreases the pair $(D,k)$.

\[\xymatrix{
&\p^2\ar@{-->}[dl]_{\rho_n^{-1}}\ar[r]^{\alpha_{n+1}}\ar@{-->}[dd]^{\tau}&\p^2\ar@{-->}[dr]^{\rho_{n+1}}\ar@{-->}[dd]^{\tau'}&\\
\p^2\ar@{-->}[dr]&&&\p^2\\
&\p^2\ar[r]_\beta&\p^2\ar@{-->}[ur]&}\] 

Case (b): $m(p_0)<m(p_1)$. Let $\tau$ be a quadratic de Jonqui\`eres map with base points $p_0,p_1,p_2$. This is possible since our assumption implies that $p_1$ is a proper base point and because $p_0,p_1,p_2$ are base points of $\rho_n^{-1}$ of multiplicity $\deg\rho_n-1,1,1$ respectively and hence not collinear. Choose $\beta_1\in\PGL_3$ which exchanges $p_0$ and $p_1$, let $\gamma=\alpha_{n+1}\beta_1^{-1}$ and choose $\beta_2\in\PGL_3$ so that $\tau':=\beta_2\tau\beta_1^{-1}\in\mathcal J$. The latter is possible since $\beta_1^{-1}(p_0)=p_1$ is a base point of $\tau$, and we have the following diagram.

\[\xymatrix{
&\p^2\ar@{-->}[dl]_{\rho_n^{-1}}\ar[rr]^{\alpha_{n+1}}\ar@{-->}[dd]^{\tau}\ar[dr]^{\beta_1}&&\p^2\ar@{-->}[dr]^{\rho_{n+1}}&\\
\p^2\ar@{-->}[dr]&&\fbox{$\p^2$}\ar[ur]^{\gamma}\ar@{-->}[d]^{\tau'}&&\p^2\\
&\p^2\ar[r]^{\beta_2}&\p^2&&
}\]

Since $\deg(\tau\rho_n)=\deg\rho_n-1$, the pair $(D,k)$ stays unchanged as we replace the sequence $(\alpha_{n+1},\rho_n)$ in the decomposition of $\rho$ by the sequence $(\gamma,(\tau')^{-1}, \beta_2,\tau\rho_n)$. In the new decomposition of $\rho$ the maps $(\tau')^{-1}$ and $\gamma$ play the roles that $\rho_n$ and $\alpha_{n+1}$ respectively played in the previous decomposition. 
In the squared $\PP^2$, we have
\[ m(p_0)=m(\beta_1(p_1))> m(\beta_1(p_0))=m(p_1).\]
Define $q_0':=\gamma^{-1}(p_0)$, $q_1':=\gamma^{-1}(\tilde{q}_1)$, $q_2':=\gamma^{-1}(\tilde{q}_2)$, and note that $q_0'=\beta_1(q_0)$, $q_1'=\beta_1(q_1)$ and $q_2'=\beta_1(q_2)$. In the new decomposition these points play the roles that $q_0,q_1,q_2$ played in the previous decomposition.

If $m(q_0')\geq m(q_1')$, we continue as in case (a) with the points $p_0,p_1,\beta_1(p_2)$ and $q_0',q_1',q_2'$. 

If $m(q_0')<m(q_1')$, we replace the sequence $(\rho_{n+1},\gamma)$ by a new sequence such that, similar to case (a), the roles  of $q_0'$ and $q_1'$ are exchanged, and we will do this without touching $p_0,p_1,\beta(b_2)$.  The replacement will not change $(D,k)$ and we can apply case (a) to the new sequence.

As $m(q_0')<m(q_1')$, the point $q_1'$ is a proper point of $\PP^2$. Analogously to the previous case, there exists $\sigma\in\mathcal{J}$ with base points $\gamma(q_0')=p_0,\gamma(q_1')=\tilde{q}_1,\gamma(q_2')=\tilde{q}_2$, and there exists $\delta_1\in\mathrm{PGL}_3$ which exchanges $p_0$ and $\tilde{q}_1$. Since $\delta_1^{-1}(p_0)=\tilde{q}_1$ is a base point of $\sigma$, there furthermore exists $\delta_2\in\mathrm{PGL}_3$ such that $\sigma':=\delta_2\sigma\delta_1^{-1}\in\mathcal{J}$. Let $\gamma_2:=\delta_1\gamma$.

\[\xymatrix{
&\p^2\ar@{-->}[dl]_{\rho_n^{-1}}\ar[rrrr]^{\alpha_{n+1}}\ar@{-->}[dd]^{\tau}\ar[dr]^{\beta_1}&&&&\p^2\ar@{-->}[dr]^{\rho_{n+1}}\ar[dl]^{\delta_1}\ar@{-->}[dd]^{\sigma}&\\
\p^2\ar@{-->}[dr]&&\fbox{$\p^2$}\ar[urrr]^{\gamma}\ar@{-->}[d]^{\tau'}\ar[rr]_{\gamma_2}&&\p^2\ar@{-->}[d]_{\sigma'}&&\p^2\ar@{-->}[dl]\\
&\p^2\ar[r]^{\beta_2}&\p^2&&\p^2&\p^2 \ar[l]_{\delta_2}&
}\]

Replacing the sequence $(\rho_{n+1},\gamma)$ with $(\rho_{n+1}\sigma^{-1}, \delta_2^{-1}, \sigma', \delta_1\gamma)$ does not change the pair $(D,k)$. The latest position with the highest degree is still the squared $\p^2$ but in the new sequence we have
\[m(\gamma_2^{-1}(p_0))=m(\beta_1(q_1))>m(\beta_1(q_0))=m(\gamma_2^{-1}(\delta_1(\tilde{q}_1)))\]
Since $p_0,p_1,\beta_1(p_2)$ were undisturbed, the inequality $m(p_0)>m(p_1)$ still holds, and we proceed as in case (a).
\par
In this proof, we have used several different quadratic maps $\tau,\tau',\sigma,\sigma'$. Note that none of these can contract $C$ (or an image of $C$), since quadratic maps only can contract curves of degree 1.
\end{proof}

\begin{remark}\label{rmk:djlines}
Suppose that $\rho\in\mathcal{J}$ preserves a line $L$. Then the Noether-equalities imply that $L$ passes either through $[1:0:0]$ and no other base points of $\rho$, or that it passes through exactly $\deg\rho-1$ simple base points of $\rho$ and not through $[1:0:0]$. 
\end{remark}

\begin{lemma}\label{lem:line1}
Let $\rho\in\mathcal{J}$ be of degree $\geq2$ and let $L$ be a line passing through exactly $\deg\rho-1$ simple base points of $\rho$ and not through $[1:0:0]$. Then there exist $\rho_1,\dots,\rho_i \in\mathcal{J}$ of degree $2$ such that $\rho=\rho_m\cdots\rho_1$ and the successive images of $L$ are lines.
\end{lemma}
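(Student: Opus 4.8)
The plan is to induct on $e:=\deg\rho\geq 2$. First I would record that the hypotheses force $\rho(L)$ to be a line: since $L$ has degree $1$, avoids the base point $p_0:=[1:0:0]$ of multiplicity $e-1$, and passes through exactly $e-1$ of the simple base points, the Noether formula for the degree of an image gives $\deg\rho(L)=e-(e-1)=1$. The base case $e=2$ is then immediate: $\rho$ is itself quadratic and de Jonqui\`eres and $\rho(L)$ is a line, so we take $m=1$, $\rho_1=\rho$. For the inductive step I assume $e\geq 3$ and aim to peel off a single quadratic de Jonqui\`eres map $\rho_1$ from the right so that $\sigma:=\rho\rho_1^{-1}$ lies in $\mathcal J$, has degree $e-1$, and so that the pair $(\sigma,\rho_1(L))$ again satisfies the hypotheses of the lemma; the inductive decomposition $\sigma=\rho_m\cdots\rho_2$ then yields $\rho=\rho_m\cdots\rho_2\rho_1$ with all successive images of $L$ being lines.

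Next I would construct $\rho_1$. Of the $2e-2$ simple base points of $\rho$, exactly $e-1$ lie on $L$ and exactly $e-1$ do not, so for $e\geq 3$ both sets have at least two elements. I fix a simple base point $p_j$ of $\rho$ not on $L$; since $\overline{p_0p_j}$ meets $L$ in a single point, at most one of the ($\geq 2$) base points on $L$ lies on it, so I may pick a simple base point $p_i$ of $\rho$ on $L$ with $p_0,p_i,p_j$ non-collinear. Let $\rho_1\in\mathcal J$ be the quadratic de Jonqui\`eres map with base points $p_0,p_i,p_j$. Because $L$ meets the base locus of $\rho_1$ only at $p_i$, the Noether formula gives $\deg\rho_1(L)=2-1=1$, so $L':=\rho_1(L)$ is again a line; and pushing the homaloidal net of $\rho$ (with multiplicities $e-1,1,1$ at $p_0,p_i,p_j$) forward through $\rho_1$ gives $\deg\sigma=2e-(e-1)-1-1=e-1$, while $\sigma=\rho\rho_1^{-1}\in\mathcal J$ as a product of de Jonqui\`eres maps.

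It remains to verify that $(\sigma,L')$ satisfies the hypotheses. Here $\sigma(L')=\rho(L)$ is a line, so by the same degree computation as in Remark~\ref{rmk:djlines} the line $L'$ is in exactly one of two configurations relative to $\sigma$: either it passes through $p_0$ and through no other base point of $\sigma$, or it avoids $p_0$ and passes through exactly $(e-1)-1=e-2$ simple base points of $\sigma$. I would exclude the first configuration by showing $L'\not\ni p_0$: the quadratic map $\rho_1$ contracts the line $\overline{p_ip_j}$ precisely to $p_0$ (as in the standard model $[x:y:z]\dashmapsto[yz:xz:xy]$, where the line through the two base points other than $p_0$ contracts to $p_0$), and $L$ meets $\overline{p_ip_j}$ only at the base point $p_i$, so $\rho_1(L)$ does not pass through $p_0$. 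Thus $L'$ passes through exactly $e-2$ simple base points of $\sigma$ and avoids $p_0$, the induction hypothesis applies to $(\sigma,L')$, and prepending $\rho_1$ completes the decomposition.

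The main obstacle lies in the bookkeeping when some of the base points involved are infinitely near rather than proper points of $\p^2$ (for instance a base point of $\rho$ on $L$ lying in a first neighbourhood, which corresponds to a tangency condition of $L$). In that situation the clean non-collinearity choice of $p_i,p_j$ and the assertion that $L$ meets $\overline{p_ip_j}$ only at $p_i$ must be replaced by the corresponding statements about infinitely near points and their proximity relations, and checking that $\rho_1$ is a well-defined quadratic de Jonqui\`eres map together with $L'\not\ni p_0$ becomes a local computation on the relevant blow-ups, to be carried out in the spirit of Lemma~\ref{lem:connecting}. The global induction is unaffected: at each stage one lowers $\deg\rho$ by $1$ while keeping the image of $L$ a line and preserving its configuration relative to the base points.
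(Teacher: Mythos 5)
Your inductive step is, in essence, the degree-lowering move of the paper's own proof: compose on the right with a quadratic map of $\mathcal{J}$ whose base points are $[1:0:0]$, a simple base point of $\rho$ on $L$, and a simple base point of $\rho$ off $L$; this drops the degree by one, keeps the image of $L$ a line, and preserves the hypotheses, exactly as you verify. (Your collinearity argument, while correct for proper points, is unnecessary: base points of multiplicities $e-1,1,1$ of a homaloidal net of degree $e$ can never be collinear, since the line through them would meet every member of the net with total multiplicity $e+1>e$ and would thus be a fixed component; this is how the paper dismisses the issue.)

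The genuine gap is your final paragraph, where the infinitely near case is declared to be ``bookkeeping'' with ``the global induction unaffected''. Neither claim holds. The base points of any plane birational map form a cluster: an infinitely near base point must be infinitely near \emph{another base point of the same map}. So if every simple base point of $\rho$ off $L$ lies in the second or higher neighbourhood of some base point --- for example a de Jonqui\`eres cubic whose simple base points form a tower $p_1\in L$ proper, $p_2$ infinitely near $p_1$ along $L$, $p_3$ infinitely near $p_2$ off $L$, $p_4$ infinitely near $p_3$ (such maps exist) --- then for every admissible choice of $p_i\in L$ and $p_j\notin L$ the quadratic map with base points $p_0,p_i,p_j$ does not exist at all, because $p_j$ is infinitely near a point which is not a base point of that quadratic map. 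No refinement by proximity relations can produce $\rho_1$, and in this situation no right factor of degree $2$ in $\mathcal{J}$ lowers the degree. This is precisely why the paper's proof contains a second, degree-\emph{preserving} case: when $\rho$ has no proper simple base point off $L$, it composes with $\tau\in\mathcal{J}$ based at $p_0$, a proper simple base point $p_1\in L$, and a \emph{general} point $r$; then $\deg(\rho\tau^{-1})=\deg\rho$, but the inverse $(\rho\tau^{-1})^{-1}=\tau\rho^{-1}$ acquires the proper simple base point $\rho(r)$ off the line $\rho(L)$, and the degree-lowering step is then applied on that side, using that decomposing $\rho$ with successive images of $L$ lines is equivalent to decomposing $\rho^{-1}$ with successive images of $\rho(L)$ lines. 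Without this extra move --- in which the degree does not drop and one passes to the inverse --- your induction cannot even begin on such maps, so the proposal is incomplete.
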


\begin{proof}
Note that the curve $\rho(L)$ is a line not passing through $\rho(L)$. Call $p_0:=[1:0:0],p_1,\dots,p_{2d-2}$ the base points of $\rho$. Without loss of generality, we can assume that $p_1,\dots,p_{d-1}$ are the simple base points of $\rho$ that are contained in $L$ and that $p_1$ is a proper base point in $\PP^2$. We do induction on the degree of $\rho$. \par
If there is no simple proper base point $p_i$, $i\geq d$, of $\rho$ in $\p^2$ that is not on $L$, choose a general point $r\in\PP^2$. There exists a quadratic transformation $\tau\in\mathcal{J}$ with base points $p_0,p_1,r$. The transformation $\rho\tau^{-1}\in\mathcal{J}$ is of degree $\deg\rho$ and sends the line $\tau(L)$ (which does not contain $[1:0:0]$) onto the line $\rho(L)$. The point $\rho(r)\in\p^2$ is a base point of $(\rho\tau^{-1})^{-1}$ not on the line $\rho(L)$. \par
So, we can assume that there exists a proper base point of $\rho$ in $\PP^2$ that is not on $L$, lets call it $p_d$. The points $p_0,p_1,p_d$ are not collinear (because of their multiplicities), hence there exists $\tau\in\mathcal{J}$ of degree 2 with base points $p_0,p_1,p_d$. The map $\rho\tau^{-1}\in\mathcal{J}$ is of degree $\deg\rho-1$ and $\tau(L)$ is a line passing through exactly $\deg\rho-2$ simple base points of $\rho\tau^{-1}$ and not through $[1:0:0]$. 
\end{proof}

\begin{lemma}\label{lem:line2}
Let $\rho\in\mathcal{J}$ be of degree $\geq2$ and let $L$ be a line passing through $[1:0:0]$ and no other base points of $\rho$. Then there exist $\rho_1,\dots,\rho_m\in\mathcal{J}$ of degree $2$ such that $\rho=\rho_m\cdots\rho_1$ and the successive images of $L$ are lines.
\end{lemma}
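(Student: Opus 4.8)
The plan is to argue by induction on $e:=\deg\rho$, peeling off one quadratic de Jonqui\`eres map at a time while keeping the image of $L$ a line through $p_0:=[1:0:0]$ that avoids all the remaining base points. If $e=2$ there is nothing to prove, so suppose $e\geq 3$. Recall that $\rho$ has $p_0$ as a base point of multiplicity $e-1$ together with $2e-2$ simple base points $p_1,\dots,p_{2e-2}$, and that by hypothesis $L$ passes through $p_0$ and through none of the $p_i$ (neither the proper ones nor those infinitely near $p_0$).

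First I would choose two of the simple base points, say $p_1,p_2$, taken to be proper points (the existence of such a pair is deferred to the last paragraph), and let $\tau\in\mathcal J$ be the quadratic de Jonqui\`eres map with base points $p_0,p_1,p_2$. This map exists because no line through $p_0$ can contain two simple base points: such a line $\ell$ would satisfy $\deg(\rho(\ell))=e-(e-1)-1-1=-1<0$, which is impossible, so $p_0,p_1,p_2$ are not collinear and $\tau$ has three proper base points. With this choice $\rho':=\rho\tau^{-1}\in\mathcal J$ has degree $2e-(e-1)-1-1=e-1$, and its simple base points are exactly the images $\tau(p_3),\dots,\tau(p_{2e-2})$ of the remaining simple base points of $\rho$, together with $p_0$ of multiplicity $e-2$; since $\tau$ is chosen with proper base points, $\tau^{-1}$ also has proper base points, so $\rho'$ acquires no new base point infinitely near $p_0$.

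The central observation is that $\tau(L)$ is again a line of the type treated in this lemma, now for $\rho'$. Indeed $\deg(\tau(L))=2-m_L(p_0)-m_L(p_1)-m_L(p_2)=2-1=1$ because $L$ passes through $p_0$ but not through $p_1,p_2$, and since $\tau\in\mathcal J$ preserves the pencil of lines through $p_0$, the image $\tau(L)$ is a line through $p_0$. Moreover $\tau(L)$ misses every other base point of $\rho'$: as $\tau$ is injective away from its three contracted lines and $L$ avoids $p_3,\dots,p_{2e-2}$, the line $\tau(L)$ avoids $\tau(p_3),\dots,\tau(p_{2e-2})$; and because $L$ also avoids the base points of $\rho$ infinitely near $p_0$, its tangent direction at $p_0$ differs from each of theirs, so — $\tau$ acting injectively on the pencil of directions at $p_0$, and introducing no new such directions — the tangent direction of $\tau(L)$ differs from that of every base point of $\rho'$ infinitely near $p_0$. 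Hence by Remark~\ref{rmk:djlines} the line $\tau(L)$ passes through $p_0$ and no other base point of $\rho'$. Applying the induction hypothesis to $\rho'$ of degree $e-1$ with the line $\tau(L)$, I obtain $\rho'=\rho_m\cdots\rho_2$ with all successive images of $\tau(L)$ lines; setting $\rho_1:=\tau$ then gives $\rho=\rho'\tau=\rho_m\cdots\rho_1$ with all successive images of $L$ lines, as desired.

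The step I expect to be the main obstacle is guaranteeing that $\tau$ can be taken with three proper base points, i.e.\ that two proper simple base points of $\rho$ exist; this is the same kind of bookkeeping of proper versus infinitely near base points that appears in Lemma~\ref{lem:line1}, not a question of degree counts. If the simple base points happen to be clustered in infinitely near position, I would first perform a degree-preserving preparatory move exactly as in the first case of Lemma~\ref{lem:line1}: pick a simple base point $p_i$ and a general point $r\in\PP^2$, let $\tau_0\in\mathcal J$ be the quadratic map with base points $p_0,p_i,r$, and replace $\rho$ by $\rho\tau_0^{-1}$. Since $\deg(\rho\tau_0^{-1})=2e-(e-1)-1-0=e$, the degree is unchanged, but a proper base point $\rho(r)$ appears off the image line $\tau_0(L)$ (itself a line through $p_0$, by the same computation as above). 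Repeating frees up enough proper simple base points, after which the degree-reducing step applies and the induction closes.
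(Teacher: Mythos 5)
Your degree-reduction step is essentially the paper's own induction step and is correct as far as it goes: if $\rho$ has two \emph{proper} simple base points $p_1,p_2$, then $p_0,p_1,p_2$ are non-collinear by your degree count, $\tau=[p_0,p_1,p_2]$ drops the degree by one, and $\tau(L)$ again satisfies the hypothesis (one justification is off, though: $\rho\tau^{-1}$ \emph{can} acquire base points infinitely near $p_0$, namely the $\tau$-images of simple base points of $\rho$ lying on the line through $p_1$ and $p_2$, since $\tau$ contracts that line to $p_0$; this is harmless because $L$ avoids those points and incidence is preserved in the bubble space, but not for the reason you give). Note, however, that you are proving more than the lemma asks: the factors only need to be degree-$2$ elements of $\mathcal{J}$, not maps with three proper base points (that conversion is the job of Lemma~\ref{lem:decompgeneric}). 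The paper exploits this by letting the auxiliary map $\tau$ have base points $p_0,p_1,p_2$ with $p_2$ possibly infinitely near, so that a \emph{single} proper simple base point of $\rho$ suffices for the reduction.

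The genuine gap is in your preparatory move. First, $\rho(r)$ is a base point of $(\rho\tau_0^{-1})^{-1}=\tau_0\rho^{-1}$, whose source is the target of $\rho$; it is not a base point of $\rho\tau_0^{-1}$, so it cannot serve as the promised new proper base point of the map you keep. Second, $\rho\tau_0^{-1}$ really can fail to gain any proper simple base point: when $p_i$ is infinitely near $p_0$, the map $\tau_0$ with base points $p_0,p_i,r$ contracts the exceptional curve of $p_0$ to the target point $p_0$, so every base point of $\rho$ in the first neighbourhood of $p_0$ other than $p_i$ — and every base point lying above such a point — is sent again to a point infinitely near $p_0$; only base points lying over $p_i$ itself become proper. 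Hence if no base point of $\rho$ is infinitely near your chosen $p_i$, the new map has no more proper simple base points than before, and ``repeating'' need not terminate. This is exactly the difficulty the paper's proof is built to avoid: in the degenerate case it observes that $\rho(q)$ is a proper simple base point of the \emph{inverse} $\tau\rho^{-1}$, which sends the line $\rho(L)$ onto $\tau(L)$ and satisfies the same hypotheses, applies the degree reduction to $\tau\rho^{-1}$, and then inverts that decomposition and appends $\tau$ to recover a decomposition of $\rho$. To repair your argument you need either this inverse trick, or to weaken your main step as the paper does so that one proper simple base point (together with a possibly infinitely near third base point for $\tau$) suffices.
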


\begin{proof}
Note that the curve $\rho(L)$ is a line passing through $[1:0:0]$. We use induction on the degree of $\rho$. 

Assume that $\rho$ has no simple proper base points, i.e. all simple base points are infinitely near $p_0:=[1:0:0]$. There exists a base point $p_1$ of $\rho$ in the first neighbourhood of $p_0$. Choose a general point $q\in\PP^2$. There exists $\tau\in\mathcal{J}$ quadratic with base points $p_0,p_1,q$. The map $\rho\tau^{-1}\in\mathcal{J}$ is of degree $\deg\rho$ and $\tau(L)$ is a line passing through the base point $p_0$ of $\rho\tau^{-1}$ of multiplicity $\deg\rho-1$ and through no other base points of $\rho\tau^{-1}$. Moreover, the point $\rho(q)$ is a (simple proper) base point of $\tau\rho^{-1}$. Therefore, $\tau\rho^{-1}$ has a simple proper base point in $\PP^2$ and sends the line $\rho(L)$ onto the line $\tau(L)$, both of which pass through $p_0$ and no other base points. 

So, we can assume that $\rho$ has at least one simple proper base point $p_1$. Let $p_2$ be a base point of $\rho$ that is a proper point of $\PP^2$ or in the first neighbourhood of $p_0$ or $p_1$. Because of their multiplicities, the points $p_0,p_1,p_2$ are not collinear. Hence there exists $\tau\in\mathcal{J}$ quadratic with base points $p_0,p_1,p_2$. The map $\rho\tau^{-1}$ is a map of degree $\deg\rho-1$ and $\tau(L)$ is a line passing through $p_0$ and no other base points. 
\end{proof}

\begin{lemma}\label{lem:decompgeneric}
Let $\rho\in\mathcal{J}$ be a map of degree $2$ that sends a line $L$ onto a line. Then there exist quadratic maps $\rho_1,\dots,\rho_n\in\mathcal{J}$ with only proper base points such that
\[\rho=\rho_n\cdots\rho_1,\]
and the successive images of $L$ are lines.
\end{lemma}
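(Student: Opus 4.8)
The plan is to argue by reducing, step by step, the number $c$ of base points of the current map that are infinitely near points of $\PP^2$ rather than proper ones, at each step peeling off on the right a quadratic de Jonqui\`eres map with three \emph{proper} base points that keeps the image of $L$ a line. First I record the possible configurations. Since $\rho$ is quadratic it has base points $p_0=[1:0:0]$ and two further simple points $p_1,p_2$, and these three are not collinear. As $\rho(L)$ is a line, $L$ meets the base locus of $\rho$ with total multiplicity exactly $1$; since passing through an infinitely near point forces passing through its predecessor, $L$ in fact meets exactly one \emph{proper} base point of $\rho$, with multiplicity one. The non-collinearity rules out both $p_1,p_2$ lying in the first neighbourhood of $p_0$, so $c\in\{0,1,2\}$, and $c=2$ can occur only for the chain $p_2\succ p_1\succ p_0$, whose only proper base point is $p_0$.

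If $c=0$ I take $n=1$. If $c=1$, say $p_0,p_1$ proper and $p_2$ infinitely near, I pick a generic proper point $r$ and a quadratic map $\tau\in\mathcal{J}$ with the three proper base points $p_0,p_1,r$ (such a map exists for any non-collinear triple containing $p_0$). Since $L$ meets exactly one of the \emph{proper} base points $p_0,p_1$ and $r\notin L$, the image $\tau(L)$ is a line. As $\tau$ and $\rho$ share the two base points $p_0,p_1$, Lemma~\ref{lem:connecting} shows $\rho\tau^{-1}$ is again quadratic, and its base points are all proper: the non-shared base point $p_2$ of $\rho$ is a tangent direction at its predecessor which, by non-collinearity of $p_0,p_1,p_2$, does not lie on the line $p_0p_1$, while a generic $r$ keeps it off the lines $p_0r$ and $p_1r$; so by the criterion in Lemma~\ref{lem:connecting} no base point of $\rho\tau^{-1}$ is infinitely near. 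Then $\rho=(\rho\tau^{-1})\tau$ is the required decomposition with $n=2$, both factors sending a line to a line.

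The main obstacle is the case $c=2$, the chain $p_2\succ p_1\succ p_0$, because $\rho$ now has only the single proper base point $p_0$; a proper quadratic map shares at most this one base point with $\rho$, and by Lemma~\ref{lem:connecting} a single common base point forces $\deg(\rho\tau^{-1})=3$, so the degree must be raised temporarily. Here $L$ necessarily meets only $p_0$. I choose generic proper points $a,b$ and a quadratic $\tau\in\mathcal{J}$ with proper base points $p_0,a,b$; then $\tau(L)$ is a line, and since $\tau$ blows up $p_0$ the first-neighbourhood point $p_1$ of $\rho$ becomes a \emph{proper} base point of $\psi:=\rho\tau^{-1}$, while $p_2$ remains in the first neighbourhood of the now proper $p_1$. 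Thus $\psi$ is a de Jonqui\`eres map of degree $3$ with strictly fewer infinitely near base points than $\rho$, still sending the line $\tau(L)$ (which passes through $p_0$) to the line $\rho(L)$. Decomposing $\psi$ into quadratic de Jonqui\`eres maps with line-images by Lemma~\ref{lem:line2}, and applying the reductions above to those factors, I obtain proper quadratic de Jonqui\`eres factors. Because every step strictly decreases the total number of infinitely near base points while keeping all successive images of $L$ lines and all factors proper, the process terminates. The delicate points to check are precisely that the degree-raising step genuinely lowers the count of infinitely near base points and that $\tau(L)$ stays a line at each stage; both are consequences of the non-collinearity constraints forced on the base points of a quadratic map, and the only real bookkeeping is to phrase the induction over de Jonqui\`eres maps of arbitrary degree (not just degree $2$) so that the interplay with Lemma~\ref{lem:line2} is well-founded.
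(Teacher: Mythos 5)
Your case analysis of the base-point configurations is correct, and your treatment of $c=0$ and $c=1$ is essentially the paper's own first case (the paper identifies the base points of $\rho\tau^{-1}$ via an explicit resolution rather than by stretching Lemma~\ref{lem:connecting} beyond its stated hypotheses -- that lemma assumes both maps have proper base points -- but your supplementary argument for why the composition has only proper base points is sound). The genuine gap is in the chain case $c=2$, and it is the termination of your recursion. After forming the cubic $\psi=\rho\tau^{-1}$ you decompose it by Lemma~\ref{lem:line2} and then ``apply the reductions above to those factors,'' claiming that every step strictly decreases the total number of infinitely near base points. But Lemma~\ref{lem:line2} is a bare existence statement about quadratic de Jonqui\`eres factors: its conclusion gives no control whatsoever over the base-point configurations of those factors. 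Nothing in the statement prevents one of them from being again a chain with two infinitely near base points, in which case your count does not decrease; that factor would have to be fed back into the degree-raising $c=2$ step, and the recursion as you have set it up is not well-founded. To close the gap you would have to go inside the proof of Lemma~\ref{lem:line2} and check that, for a cubic in $\mathcal J$ whose only infinitely near base point lies in the first neighbourhood of a simple proper base point, the quadratic factors produced there have at most one infinitely near base point each (this is in fact true for every choice allowed in that proof, but it is a property of the construction, not of the statement you cite).

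It is worth comparing this with how the paper sidesteps the issue: in the chain case it does \emph{not} insist that the auxiliary quadratic map have three proper base points. Instead it takes $\rho_1$ with base points $[1:0:0]$, $q$, $r$, where $q$ is the infinitely near point of $\rho$ in the first neighbourhood of $[1:0:0]$ and $r$ is general. Since $\rho_1$ then shares \emph{two} base points with $\rho$, the map $\rho_2=\rho_1\rho^{-1}$ is still quadratic, and both $\rho_1$ and $\rho_2$ have exactly two proper base points; thus $\rho=\rho_2^{-1}\rho_1$ reduces the chain case to two instances of your case $c=1$, the degree never exceeds $2$, and no induction measure is needed. If you modify your $c=2$ step to let the auxiliary map share the point $q$ with $\rho$, your argument closes immediately and the appeal to Lemma~\ref{lem:line2} becomes unnecessary.
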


\begin{proof} Suppose first that exactly two of the three base points of $\rho$ are proper. We number 
%\vspace{0.15cm}
\begin{figure}[h]
\begin{tabular}{cc}
\begin{minipage}{0.5\textwidth}
\vspace{-1.1cm}
\centering\captionsetup{width=.8\linewidth}
\def\svgwidth{\columnwidth}
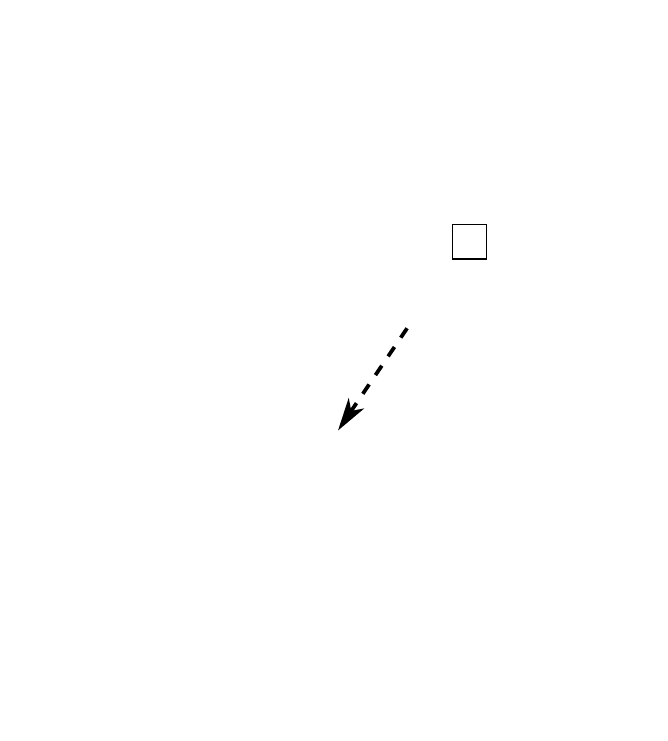
\caption{Numbers in square brackets denote self-intersection.}\label{fig:06}
%\vspace{0.15cm}
\end{minipage}
&
\begin{minipage}{0.46\textwidth}
\vspace{-0.92cm}
 the base points so that $p_1,p_2\in\p^2$ and so that $p_3$ is in the first neighbourhood of $p_1$, and denote by $\ell_1\subset\p^2$ the line through $p_1$ which has the tangent direction defined by $p_3$. Choose a general point $r\in\p^2$, and define a quadratic map $\rho_1$ with three base points $p_1,p_2,r\in\p^2$. A minimal resolution of $\rho$ is given by $\pi$ and $\eta$ as in Figure~\ref{fig:06}; it is obtained by blowing up, in order, $p_1,p_2,p_3$, and then contracting in order $\tilde\ell_2:=\eta_*^{-1}(\ell_2)$, $\tilde\ell_1:=\eta_*^{-1}(\ell_1)$ and the exceptional divisor corresponding to $p_1$. By looking at the pull back of a general line in $\p^2$ with respect to $\rho_2:=\rho_1\rho^{-1}$, we see that this map has three proper base points $E_{p_1},\,\rho(r),\,\pi_*(\tilde\ell_1)$. This gives us a decomposition of the desired form: $\rho=\rho_2^{-1}\rho_1.$ Note that since $\rho$ sends the line $L$ onto a line, $L$ has to pass through exactly one of the base points of $\rho$, and this base point  \linebreak
 \end{minipage}
\end{tabular}
\end{figure}

\vspace{-1.05cm}
\noindent has to be proper. Thus $L$ is sent to a line by $\rho_1$. Using the diagram in Figure~\ref{fig:06}, we can see that this line is further sent by $\rho_2^{-1}$ to a line through $E_{p_1}$ if $L$ passes through $p_1$ and a line through $\pi_*(\tilde\ell_1)$ if $L$ passes through $p_2$.\par
 If $[1:0:0]$ is the only proper base point of $\rho$, we reduce to the first case as follows. Denote by $q$ the base point in the first neighbourhood of $[1:0:0]$ and choose a general point $r\in\PP^2$. Let $\rho_1$ be a quadratic map with base points $[1:0:0],q,r$, and let $\rho_2:=\rho_1\rho^{-1}$. If we denote the base points of $\rho^{-1}$ by $q_1,q_2,q_3$ so that $q_1$ is the proper base point and $q_2$ the base point in the first neighbourhood of $q_1$, then the base points of $\rho_2$ are $q_1,q_2,\rho(r)$, i.e. it has exactly two proper base points.\par It is also clear that $\rho_1$ sends $L$ to a line, which is further sent by $\rho_2^{-1}$ to a line through $q_1$. Thus we can apply the first part of this proof to each of $\rho_2^{-1}$ and $\rho_1$ in $\rho=\rho_2^{-1}\rho_1,$ and thus get a decomposition of the desired form.
\vspace{0.15cm}

\end{proof}

\noindent {\bf Theorem \ref{thm:generators}.} {\em
For any line $L$, the group $\Dec(L)$ is generated by $\Dec(L)\cap\PGL_3$ and any of its quadratic elements having three proper base points in $\PP^2$. }
\begin{proof}
By conjugating with an appropriate automorphism of $\PP^2$, we can assume that $L$ is given by $x=y$. Note that the standard quadratic involution $\sigma\colon[x:y:z]\dashmapsto[yz:xz:xy]$ is contained in $\Dec(L)$. 
It follows from Proposition~\ref{prop:djdecomp}, Remark~\ref{rmk:djlines}, and Lemmata~\ref{lem:line1}, \ref{lem:line2} and \ref{lem:decompgeneric} that every element $\rho\in\Dec(L)$ has a composition $\rho=\alpha_{m+1}\rho_m\alpha_m\rho_{m-1}\alpha_{m-1}\cdots\alpha_2\rho_1\alpha_1$, where $\alpha_i\in\mathrm{PGL}_3$ and $\rho_i\in\mathcal{J}$ are quadratic with only proper base points in $\PP^2$ such that the successive images of $L$ are lines. By composing the $\rho_i$ from the left and the right with linear maps, we obtain a decomposition 
\[\rho=\alpha_{m+1}\rho_m\alpha_m\rho_{m-1}\alpha_{m-1}\cdots\alpha_2\rho_1\alpha_1\]
where $\alpha_i\in\mathrm{PGL}_3\cap\Dec(L)$ and $\rho_i\in\Dec(L)$ are of degree $2$ with only proper base points in $\PP^2$.
It therefore suffices to show that for any quadratic element $\rho\in\Dec(L)$ having three proper base points in $\PP^2$ there exist $\alpha,\beta\in\Dec(L)\cap\mathrm{PGL}_3$ such that $\sigma=\beta\rho\alpha$. 

By Remark~\ref{rmk:djlines}, for any quadratic element of $\Dec(L)$ the line $L$ passes through exactly one of its base points in $\PP^2$. 

Let $q_1=[0:0:1]$, $q_2=[0:1:0]$, $q_3=[1:0:0]$. They are the base points of $\sigma$, and $\sigma$ sends the pencil of lines through $q_i$ onto itself. Furthermore, $q_1\in L$ but $q_2,q_3\notin L$. Let $s:=[1:1:1]\in L$. Remark that $\sigma(s)=s$ and that no three of $q_1,q_2,q_3,s$ are collinear.

Let $\rho\in\Dec(L)$ be another quadratic map having three proper base points in $\PP^2$. 
Let $p_1,p_2,p_3$ (resp. $p_1',p_2',p_3'$) be its base points (resp. the ones of $\rho^{-1}$). Say $L$ passes through $p_1$ and $\rho$ sends the pencil of lines through $p_i$ onto the pencil of lines through $p_i'$, $i=1,2,3$. Pick a point $r\in L\setminus\{p_1\}$, not collinear with $p_2,p_3$. Then no three of $p_1,p_2,p_3,r$ (resp. $p_1',p_2',p_3',\rho(r)$) are collinear. In particular, there exist $\alpha,\beta\in\mathrm{PGL}_3$ such that
\[\alpha\colon\begin{cases}q_i\mapsto p_i\\ s\mapsto r\end{cases},\quad\beta\colon\begin{cases}p_i'\mapsto q_i\\ \rho(r)\mapsto s\end{cases}\]
Note that $\alpha,\beta\in\Dec(L)\cap\mathrm{PGL}_3$. Furthermore, the quadratic maps $\sigma,\rho':=\beta\rho\alpha\in\Dec(L)$ and their inverse all have the same base points (namely $q_1,q_2,q_3$) and both $\sigma,\rho'$ send the pencil through $q_i$ onto itself. Since moreover $\rho'(s)=\sigma(s)=s$, we have $\sigma=\rho'$.
\end{proof}

\section{$\Dec(L)$ is not an amalgam}\label{sec:noamalgam}

Just like $\Bir(\PP^2)$, its subgroup $\Dec(L)$ is generated by its linear elements and one quadratic element (Theorem~\ref{thm:generators}). In \cite[Corollary A.2]{C13}, it is shown that $\Bir(\PP^2)$ is not an amalgamated product. In this section we adjust the proof to our situation and prove that the same statement holds for $\Dec(L)$. \par
The notion of being an amalgamated product is closely related to actions on trees, or, in this case, $\R$-trees.

\begin{definitionlemma}
A {\em real tree}, or $\R$-tree, can be defined in the following three equivalent  ways \cite{C01}: 
\begin{enumerate}
\item A geodesic space which is 0-hyperbolic in the sense of Gromov.
\item A uniquely geodesic metric space for which $[a,c]\subset[a,b]\cup[b,c]$ for all $a,b,c$.
\item A geodesic metric space with no subspace homeomorphic to the circle.
\end{enumerate}
We say that a real tree is a {\it complete real tree} if it is complete as a metric space.
\end{definitionlemma}
Every ordinary tree can be seen as a real tree by endowing it with the usual metric but not every real tree is isometric to an simplicial tree (endowed with the usual metric) \cite[\S II.2, Proposition 2.5, Example]{C01}. 

\begin{definition}\label{def:FR}
A group $G$ has the {\it property} $(\mathrm{F}\R)_{\infty}$ if for every isometric action of $G$ on a complete real tree, every element has a fixed point.
\end{definition}
We summarize the discussion in \cite[before Remark A.3]{C13} in the following result.

\begin{lemma}\label{lem:FRamalgam} 
If a group $G$ has property $(\mathrm{F}\R)_{\infty}$, it does not decompose as non-trivial amalgam.
\end{lemma}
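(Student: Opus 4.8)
The plan is to prove the contrapositive by invoking Bass--Serre theory: I will show that if $G$ splits as a nontrivial amalgam, then $G$ fails property $(\mathrm{F}\R)_\infty$. So suppose $G=A*_C B$ is a nontrivial amalgamated product, meaning that $C$ is a proper subgroup of both $A$ and $B$. First I would recall the associated Bass--Serre tree $T$, whose vertex set is the disjoint union of cosets $G/A$ and $G/B$ and whose edge set is $G/C$, the edge $gC$ joining $gA$ to $gB$. The group $G$ acts on $T$ by left translation, and this action is by graph automorphisms without inversions. Realising each edge as a copy of the unit interval $[0,1]$ equips $T$ with a path metric on which $G$ acts by isometries.

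Next I would check that $T$, with this metric, is a complete real tree in the sense of the Definition and Lemma above. Being a connected acyclic graph, $T$ is uniquely geodesic and contains no subspace homeomorphic to the circle, so it satisfies condition (3); and a simplicial tree with the standard path metric is complete as a metric space, since the tail of any Cauchy sequence eventually lies in a ball of radius $<1$, i.e.\ inside the star of a single vertex or the interior of a single edge, each of which is complete. Hence the $G$-action on $T$ is an isometric action on a complete real tree. The crucial step is then to exhibit an element of $G$ with no fixed point in $T$: choose $a\in A\setminus C$ and $b\in B\setminus C$, which exist precisely because the amalgam is nontrivial, and set $g:=ab$. By the normal-form theory of amalgamated products, $g$ is a cyclically reduced word of length $2$, hence of infinite order and not conjugate into $A$ or into $B$. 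Since the stabiliser of the vertex $hA$ (resp.\ $hB$) is the conjugate $hAh^{-1}$ (resp.\ $hBh^{-1}$), the element $g$ fixes no vertex of $T$; and because the action is without inversions, any element fixing a point of $T$ must fix a vertex. Therefore $g$ has no fixed point in $T$, contradicting property $(\mathrm{F}\R)_\infty$, and we conclude that $G$ does not decompose as a nontrivial amalgam.

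The step I expect to be the main obstacle is the verification that $g=ab$ acts without a fixed point: this is exactly where the nontriviality hypothesis $C\subsetneq A$, $C\subsetneq B$ enters, and it relies on the conjugacy/normal-form results for amalgamated products (a cyclically reduced word of length $\geq 2$ is not conjugate into a factor) together with the identification of vertex stabilisers as the conjugates of $A$ and $B$. The completeness and real-tree verification for $T$ is routine but needs to be recorded, since property $(\mathrm{F}\R)_\infty$ is phrased in terms of \emph{complete} real trees rather than simplicial trees.
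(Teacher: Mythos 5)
Your proof is correct: the Bass--Serre action on the metrized tree of the splitting $G=A*_CB$, together with the observation that $g=ab$ (with $a\in A\setminus C$, $b\in B\setminus C$) lies in no vertex stabiliser and hence has no fixed point, is exactly the standard argument behind this lemma. The paper itself gives no in-text proof --- it merely summarizes the discussion preceding Remark A.3 in \cite{C13} --- and your write-up supplies precisely that argument in full, including the (routine but necessary) verification that the simplicial Bass--Serre tree is a \emph{complete} real tree, so it matches the intended approach.
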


We will devote the rest of this section to proving Proposition~\ref{prop:amalgam} and thereby showing that $\Dec(L)$ is not an amalgam.

\begin{proposition}\label{prop:amalgam}
The decomposition group $\Dec(L)$ has property $(\mathrm{F\R})_{\infty}$. 
\end{proposition}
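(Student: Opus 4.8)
The plan is to follow Cornulier's strategy from \cite{C13} and show directly that in any isometric action of $G:=\Dec(L)$ on a complete real tree $T$, every element is elliptic. First I would set up the translation-length machinery: for $g\in G$ put $\ell(g)=\inf_{x\in T}d(x,gx)$, which on a real tree equals $\lim_n d(x,g^nx)/n$, is invariant under conjugation, and satisfies $\ell(g^n)=|n|\,\ell(g)$; since $T$ is complete, $g$ is elliptic (fixes a point) if and only if $\ell(g)=0$. This yields three reusable criteria: (i) if $g$ is conjugate to $g^n$ for some $|n|\ge 2$, then $\ell(g)=0$; (ii) every finite-order element is elliptic, as its orbit is finite hence bounded; and (iii) (Serre's lemma) if $a$, $b$ and $ab$ are all elliptic then $\mathrm{Fix}(a)\cap\mathrm{Fix}(b)\neq\emptyset$, so a group generated by elliptic elements with pairwise-intersecting fixed subtrees has a global fixed point by the Helly property of real trees.

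By Theorem~\ref{thm:generators}, $G$ is generated by $H:=\Dec(L)\cap\PGL_3$ together with the involution $\sigma$, and $\sigma$ is elliptic by (ii). The bulk of the work is therefore to control $H$, the stabiliser of $L$ in $\PGL_3$, a connected parabolic subgroup with unipotent radical $R_u\cong k^2$ and Levi factor isogenous to $\GL_2(k)$. Since $k$ is algebraically closed, the Jordan decomposition reduces matters to unipotent and semisimple elements. Unipotent elements fall under criterion (i): if $u^t$ runs through a one-parameter unipotent subgroup $U\cong(k,+)$ and $d$ is a torus element scaling $U$ with $d u^t d^{-1}=u^{\lambda t}$, then choosing $\lambda=2$ (possible as $k$ is algebraically closed) makes $u^t$ conjugate to its square, so every unipotent is elliptic; the same device applied to the opposite unipotent subgroup $U^-$ makes every element of $U$ and $U^-$ elliptic.

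The heart of the proof, and the step I expect to be the main obstacle, is the semisimple (torus) case, for which criterion (i) genuinely fails: a regular torus element $\mathrm{diag}(a,a^{-1})$ is not conjugate in $\PGL_2$ to any proper power, and $k^{*}$ is a divisible group that can in principle act on a line by translations, so no argument internal to the torus will suffice. Here I would use the ambient group rather than the torus alone. By (iii), since $U$ is abelian with all elements elliptic, $U$ has a fixed subtree $\mathrm{Fix}(U)\neq\emptyset$, and likewise $\mathrm{Fix}(U^-)\neq\emptyset$; as $U$ and $U^-$ generate the Levi $\SL_2(k)$, it is enough to prove $\mathrm{Fix}(U)\cap\mathrm{Fix}(U^-)\neq\emptyset$, for then the Levi, and in particular every torus element, fixes a common point. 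I would establish this intersection by a ping-pong argument on the bridge between the two subtrees, using that the Weyl element (of finite order, hence elliptic) interchanges $U$ and $U^-$ and that the torus-scaling relation renders any positive bridge length incompatible with the Bruhat relations expressing the Weyl element through $U$ and $U^-$. Together with the elliptic unipotent radical, this gives a global fixed point for all of $H$.

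Finally I would propagate from $H$ and $\sigma$ to an arbitrary $\rho\in G$. Writing $\rho$ via Theorem~\ref{thm:generators} as a product of linear maps in $H$ and conjugates of $\sigma$, I would show that $\rho$ lies in a subgroup whose finitely many generators and their relevant products are all elliptic with pairwise-meeting fixed subtrees, so that (iii) and the Helly property force $\rho$ to be elliptic; the delicate point, as in \cite{C13}, is to verify, using the Cremona relations, that the fixed subtree of $H$ can be matched with that of $\sigma$ so that products crossing between the linear part and $\sigma$ do not become hyperbolic. Since $\rho$ is arbitrary, $\ell(\rho)=0$ for all $\rho\in\Dec(L)$, i.e. $\Dec(L)$ has property $(\mathrm{F}\R)_{\infty}$, as required.
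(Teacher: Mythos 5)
Your toolkit (translation lengths, the ellipticity criteria, Serre's lemma plus the Helly property) is the right one, and your criterion~(i) argument that every unipotent element of $H=\Dec(L)\cap\PGL_3$ is elliptic is correct. But there is a fatal gap at the step where you pass from elementwise ellipticity to a nonempty global fixed set: from ``$U\cong(k,+)$ is abelian and all its elements are elliptic'' you conclude $\mathrm{Fix}(U)\neq\emptyset$. Serre's lemma and the Helly property only yield a common fixed point for \emph{finitely generated} subgroups; $U$ is uncountable, not finitely generated, and the conclusion genuinely fails in this generality. A standard counterexample is the unipotent subgroup of upper unitriangular matrices in $\SL_2(\mathbb{Q}_p)$ acting on its Bruhat--Tits tree (a complete tree): every element fixes a horoball, but the horoballs shrink toward an end and have empty intersection, so the group has no fixed point --- it only \emph{stably fixes an end}. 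This is not a technicality: the entire architecture of the paper's proof (and of Cornulier's) is dictated by exactly this phenomenon. It is why property $(\mathrm{F}\R)_{\infty}$ is formulated elementwise, why Remark~\ref{rmk:FR} offers the alternative ``fixed point \emph{or} stably fixed end'', and why Lemma~\ref{lem:A7} is devoted entirely to the case where $\mathcal{A}_L$ has no fixed point. Your subsequent deductions ($\mathrm{Fix}(U^-)\neq\emptyset$, the bridge/ping-pong argument giving a fixed point of the Levi, and the global fixed point for all of $H$) all rest on this invalid step, so the semisimple case --- which you correctly identify as the heart of the matter --- remains unproved. For comparison, the paper never claims $H$ has a global fixed point (it may not); it proves, via the Shalom--Vaserstein property (T) for $\mathrm{EL}_3(R)$, that every finitely generated subgroup of $\mathcal{A}_L$ has one, i.e.\ that $\mathcal{A}_L$ has $(\mathrm{F}\R)_{\infty}$, and then runs separate arguments in the fixed-point case and in the end-fixing case.

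Your final ``propagation'' step is a second gap of the same nature: an arbitrary $\rho\in\Dec(L)$ is a product of elliptic elements, but a product of elliptic isometries of a tree is in general hyperbolic, and the point you defer --- matching the fixed subtree of $H$ with that of $\sigma$ ``using the Cremona relations'' --- is precisely the content of the proof, not a routine verification. The paper does it concretely: it shows $\mathcal{A}_L=\langle\mathcal{D}_L,\mu_1,\mu_2,P\rangle$ (Lemma~\ref{lem:A6}), notes that $\sigma\mu_1$ has order $3$ and $\sigma\mu_2$ has order $6$, and then, assuming $\mathrm{Fix}(\mathcal{A}_L)$ and $\mathrm{Fix}(\sigma\mathcal{A}_L\sigma)$ are nonempty and disjoint, shows that some $\mu_I$ moves the bridge $\mathcal{S}=[x_1,x_2]$ between them off itself except at $x_1$, whence $d(x_1,(\sigma\mu_I)^k x_1)=ks$ for all $k$, contradicting $(\sigma\mu_I)^6=1$. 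Without an argument of this precision --- specific generators, specific finite-order products, and control of the bridge --- your outline does not close.
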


By convention, from now on, $\mathcal{T}$ will denote a complete real tree and all actions on $\mathcal{T}$ are assumed to be isometric. 

\begin{definition}\label{def:rayendstably} Let $\mathcal{T}$ be a complete real tree. 
\begin{enumerate} 
\item A {\em ray} in $\mathcal{T}$  is a geodesic embedding $(x_t)_{t\geq 0}$ of the half-line.
\item An {\it end} in $\mathcal{T}$ is an equivalence class of rays, where we say that two rays $x$ and $y$ are equivalent if there exists $t,t'\in\R$ such that $\{x_s;\,\,s\geq t\}=\{y_s';\,\,s'\geq t'\}$.
\item Let $G$ be a group of isometries of $\mathcal{T}$ and $\omega$ an end in $\mathcal{T}$ represented by a ray $(x_t)_{t\geq 0}$. The group $G$ {\it stably fixes the end} $\omega$ if for every $g\in G$ there exists $t_0:=t_0(g)$ such that $g$ fixes $x_t$ for all $t\geq t_0$. 
\end{enumerate}
\end{definition}

\begin{remark}\label{rmk:FR}\cite[Lemma A.9]{C13}
For a group $G$, property $(\mathrm{F}\R)_{\infty}$ is equivalent to each of the following statements:
\begin{enumerate}
\item\label{A9:1} For every isometric action of $G$ on a complete real tree, every finitely generated subgroup has a fixed point.
\item\label{A9:2} Every isometric action of $G$ on a complete real tree has a fixed point or stably fixes an end.
\end{enumerate}
\end{remark}

\begin{definition}
For a line $L\subset\PP^2$, define $\mathcal{A}_L:=\mathrm{PGL}_3\cap\Dec(L)$. If $L$ is given by the equation $f=0$, we also use the notation $\mathcal{A}_{\{f=0\}}$.
\end{definition}

\begin{lemma}\label{lem:A5}
For any line $L\subset\PP^2$ the group $\mathcal{A}_L$ has property $(\mathrm{F}\R)_{\infty}$. 
\end{lemma}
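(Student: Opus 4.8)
The plan is to identify $\mathcal{A}_L$ explicitly and then exploit the arithmetic of the ground field. After conjugating by an element of $\PGL_3$ we may assume $L$ is the line at infinity, so that $\mathcal{A}_L=\PGL_3\cap\Dec(L)=\Stab_{\PGL_3}(L)$ is precisely the group of affine transformations of $\A^2=\PP^2\setminus L$, i.e. $\mathcal{A}_L\cong\Aff_2(\k)=\GL_2(\k)\ltimes\k^2$. It is worth stressing at the outset why no field-independent structural argument can work: whenever $\k$ carries a nontrivial real valuation $v$, the homomorphism $v\circ\det\colon\Aff_2(\k)\to\R$ turns $\mathcal{A}_L$ into a group acting by isometries on the real line $\R$ (a complete real tree), and the element $\diag(t,1)$ with $v(t)\neq0$ then acts as a fixed-point-free translation. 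Hence the proof must exploit that $\k$ is \emph{locally finite}, i.e. algebraic over its prime field $\mathbb{F}_p$, which forces $\PGL_3(\k)$ to be a locally finite group.

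Granting this, the argument is short. First I would pass to the reformulation of $(\mathrm{F}\R)_\infty$ given in Remark~\ref{rmk:FR}: it is enough to check that, for every isometric action of $\mathcal{A}_L$ on a complete real tree, every finitely generated subgroup $H\leq\mathcal{A}_L$ has a fixed point. The point is that every such $H$ is finite. Indeed, choosing matrix representatives in $\GL_3(\k)$ of a finite generating set of $H$, all their entries lie in the subfield of $\k$ that they generate; since $\k$ is algebraic over $\mathbb{F}_p$, this subfield is some finite field $\mathbb{F}_q$, and therefore $H$ is (the image in $\PGL_3$ of) a subgroup of the finite group $\GL_3(\mathbb{F}_q)$.

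It then remains to recall the standard fact that a finite group acting by isometries on a complete real tree has a fixed point. A complete real tree is a complete $\mathrm{CAT}(0)$ space (being geodesic and $0$-hyperbolic), so the Bruhat--Tits fixed point theorem applies: the circumcentre of the (finite, hence bounded) orbit of any point under $H$ is a well-defined point fixed by all of $H$. Thus every finitely generated subgroup of $\mathcal{A}_L$ fixes a point in every action, and by Remark~\ref{rmk:FR} this is exactly the assertion that $\mathcal{A}_L$ has property $(\mathrm{F}\R)_\infty$.

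The only genuine obstacle here is conceptual: one must recognise that the naive approach of proving $(\mathrm{F}\R)_\infty$ for the abstract group $\Aff_2(\k)$ cannot succeed over arbitrary fields---the $v\circ\det$ action above is a counterexample---and that the correct mechanism is the local finiteness of $\overline{\mathbb{F}_p}$, which collapses every finitely generated subgroup to a finite one. After that observation, the two ingredients (the reduction to finitely generated subgroups in Remark~\ref{rmk:FR} and the finite-group fixed point theorem on complete real trees) leave nothing to compute.
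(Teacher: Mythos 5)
Your write-up is internally sound, but it does not prove the lemma in the generality in which the paper states it, and it is a genuinely different argument from the paper's. The paper does not assume $\k$ is locally finite: for an arbitrary algebraically closed field (and Section~\ref{sec:noamalgam} is visibly written with $\k=\C$ in mind, cf.\ the definition of $\mathcal{D}_L$ with $s,t\in\C^*$), its proof of Lemma~\ref{lem:A5} claims that every element of a finite subset $F\subset\mathcal{A}_{\{x=0\}}$ is a product of elementary matrices \emph{contained in} $\mathcal{A}_{\{x=0\}}$, collects their entries in a finitely generated subring $R\subset\k$, and then applies the Shalom--Vaserstein theorem (property (T) for $\mathrm{EL}_3(R)$), Watatani's theorem to get a fixed point on $\mathcal{T}$, and Serre's fixed-point lemma, concluding by Remark~\ref{rmk:FR}(\ref{A9:1}). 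You replace all of this by local finiteness of $\overline{\mathbb{F}_p}$ (finitely generated subgroups of $\PGL_3(\overline{\mathbb{F}_p})$ are finite) plus the Bruhat--Tits circumcentre argument; those two steps are correct, but they only establish the lemma when $\k$ is an algebraic closure of $\mathbb{F}_p$. Measured against the stated lemma, that silent restriction of scope is a gap.

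That said, the obstruction you raise is real, and it in fact exposes an error in the paper's own proof rather than in your reasoning. Identifying $\mathcal{A}_L\cong\Aff_2(\k)=\GL_2(\k)\ltimes\k^2$, the determinant of the linear part is a surjective character $\chi\colon\mathcal{A}_L\to\k^*$; whenever $\k^*$ admits a nontrivial homomorphism to $(\R,+)$ --- over $\C$ one needs no valuation at all, $\log|\cdot|$ suffices --- the composition gives an isometric action of $\mathcal{A}_L$ on the complete real tree $\R$ by translations in which any $\diag(t,1)\in\GL_2$ with $|t|\neq 1$ is fixed-point free, so $\mathcal{A}_L$ fails $(\mathrm{F}\R)_\infty$ for every algebraically closed $\k\not\cong\overline{\mathbb{F}_p}$. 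The step of the paper's proof that breaks is exactly its first sentence: the elementary matrices lying in $\mathcal{A}_{\{x=0\}}$ are those with off-diagonal entry in positions $(2,1),(3,1),(2,3),(3,2)$, and these generate only $\ker\chi\cong\SL_2(\k)\ltimes\k^2$, a proper normal subgroup with quotient $\k^*$ (this fails even over $\overline{\mathbb{F}_p}$); and one cannot instead use the full $\mathrm{EL}_3(R)$, since $e_{12},e_{13}$ do not stabilize the line, so that group does not act on $\mathcal{T}$, while enlarging ``elementary'' to include the diagonal scalings of Lemma~\ref{lem:A6} destroys property (T), the resulting group having infinite abelianization. One consistency check worth recording: your action does not extend to $\Dec(L)$, because $\sigma\diag(s,s,t)\sigma=\diag(s,s,t)^{-1}$ forces every homomorphism $\Dec(L)\to\R$ to vanish on $\mathcal{D}_L$ and hence, by Lemma~\ref{lem:A6}, on all of $\mathcal{A}_L$; so Theorem~\ref{cor:amalgam} is not directly refuted, but its proof, which invokes Lemma~\ref{lem:A5} for restricted actions, inherits the gap outside the locally finite case that you isolate.
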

\begin{proof}
Since for two lines $L$ and $L'$ the groups $\Dec(L)$ and $\Dec(L')$ are conjugate, it is enough to prove the lemma for one line, say the line given by $x=0$. Note that $A=(a_{ij})_{1\leq i,j\leq 3}\in\PGL_3$ is in $\mathcal A_{\{x=0\}}$ if and only if $a_{12}=a_{13}=0$.\par
Let $\mathcal{A}_{\{x=0\}}$ act on $\mathcal{T}$ and let $F\subset \mathcal{A}_{\{x=0\}}$ be a finite subset. The elements of $F$ can be written as a product of elementary matrices contained in $\mathcal{A}_{\{x=0\}}$; let $R$ be the (finitely generated) subring of $k$ generated by all entries of the elementary matrices contained in $\mathcal A_{\{x=0\}}$ that are needed to obtain the elements in $F$. Then $F$ is contained in $\mathrm{EL}_3(R)$, the subgroup of $\mathrm{SL}_3(R)$ generated by elementary matrices. By the Shalom-Vaserstein theorem (see \cite[Theorem 1.1]{EJ10}), $\mathrm{EL}_3(R)$ has Kazhdan's property (T) and in particular (as $\mathrm{EL}_3(R)$ is countable) has a fixed point in $\mathcal{T}$ \cite[Theorem 2]{Wat82}, so $F$ has a fixed point in $\mathcal{T}$. It follows that the subgroup of $\mathcal{A}_{\{x=0\}}$ generated by $F$ has a fixed point \cite[\S I.6.5, Corollary 3]{S77}.
In particular, by Remark~\ref{rmk:FR} (\ref{A9:1}), $\mathcal{A}_{\{x=0\}}$ has property $(\mathrm{F}\R)_{\infty}$. 
\end{proof}

From now on, we fix $L$ to be the line given by $x=y$. It is enough to prove Proposition~\ref{prop:amalgam} for this line since $\Dec(L)$ and $\Dec(L')$ are conjugate groups (by linear elements) for all lines $L$ and $L'$. As before, we denote the standard quadratic involution by $\sigma\in\Bir(\PP^2)$; with our choice of $L$, it is contained in $\Dec(L)$.

Let $\mathcal{D}_L\subset\mathrm{PGL}_3$ be the subgroup of diagonal matrices that send $L$ onto $L$, i.e. $$\mathcal{D}_L:=\left\{\diag(s,s,t)\,s,t\in\C^*\right\}
\subset\mathrm{PGL}_3.$$

\begin{lemma}\label{lem:A6}
We have $\langle \mathcal{D}_L,\mu_1,\mu_2,P\rangle=\mathcal{A}_L$, with the three involutions \[\mu_1:=\begin{bmatrix} -1&0&1\\0&-1&1\\0&0&1\end{bmatrix}\in\mathcal A_L,\, \mu_2:=\begin{bmatrix} -1&0&0\\0&-1&0\\1&0&1\end{bmatrix}\in\mathcal A_L,\,\,\textrm{ and } P:=\begin{bmatrix}0&1&0\\1&0&0\\0&0&1\end{bmatrix} \in\mathcal A_L.\]

\end{lemma}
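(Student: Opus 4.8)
The plan is to realise $\mathcal{A}_L$ as a semidirect product and then to show that the subgroup $G:=\langle\mathcal{D}_L,\mu_1,\mu_2,P\rangle$ both surjects onto the quotient and contains the kernel. After conjugating by the fixed linear map $[x:y:z]\mapsto[x-y:y:z]$, which carries the line $L=\{x=y\}$ to the coordinate line $\{u=0\}$, the group $\mathcal{A}_L$ becomes the stabiliser of a coordinate line, i.e. the matrices $(b_{ij})$ with $b_{12}=b_{13}=0$. Normalising the $(1,1)$-entry to $1$ identifies this stabiliser with $\G_a^2\rtimes\GL_2$, where an element is recorded as a pair $(v,A)$ with $v\in\C^2$ the first column below the diagonal and $A\in\GL_2$ the lower-right $2\times2$ block, and the product is $(v,A)(v',A')=(v+Av',AA')$. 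In particular $\GL_2$ acts on the kernel $\G_a^2\cong\C^2$ through its standard representation, and the projection $\pi\colon\mathcal{A}_L\to\GL_2$ has kernel exactly $\G_a^2$. It therefore suffices to prove the two statements $\pi(G)=\GL_2$ and $\G_a^2\subseteq G$.

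For the first statement I would compute the images of the four generators under $\pi$. The diagonal group $\mathcal{D}_L$ yields the one-parameter torus $\{\diag(1,\mu):\mu\in\C^*\}$; multiplying the images of $\mu_1$ and $\mu_2$ by the element $\diag(1,-1)$ produces the elementary unipotents $\left[\begin{smallmatrix}1&1\\0&1\end{smallmatrix}\right]$ and $\left[\begin{smallmatrix}1&0\\-1&1\end{smallmatrix}\right]$, and conjugating these by the torus rescales the off-diagonal entry, so that $\pi(G)$ contains all of $\left[\begin{smallmatrix}1&*\\0&1\end{smallmatrix}\right]$ and $\left[\begin{smallmatrix}1&0\\ *&1\end{smallmatrix}\right]$. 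Since $\SL_2$ is generated by its upper and lower unipotent subgroups and $\GL_2=\SL_2\cdot\{\diag(1,\mu)\}$, this gives $\pi(G)=\GL_2$.

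For the second statement, note first that $G\cap\G_a^2=\ker(\pi|_G)$ is normal in $G$, hence invariant under the $\GL_2$-action induced by $\pi(G)=\GL_2$; as $\GL_2$ acts transitively on $\C^2\setminus\{0\}$, it is enough to produce a single nonzero vector in $G\cap\G_a^2$. I would do this by writing $-\mathrm{Id}\in\SL_2$ as a word in the unipotent elements realised above, keeping track of the translation part that this word carries inside $G$, and then comparing it with $P$, whose $\pi$-image is also $-\mathrm{Id}$. The product of one with the inverse of the other lies in the kernel $\G_a^2$, and a short computation shows this translation to be nonzero. Transitivity of the $\GL_2$-action then forces $G\cap\G_a^2=\G_a^2$, and together with $\pi(G)=\GL_2$ this yields $G=\mathcal{A}_L$.

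The bookkeeping in the first two paragraphs is routine, so the crux is the last step: the natural generators feed almost entirely into the $\GL_2$-quotient, and the only way to detect the translation subgroup is to cancel the linear parts of two different words having the same image in $\GL_2$. Arranging such a pair whose translation parts do \emph{not} cancel -- and verifying that the surviving vector is genuinely nonzero rather than accidentally trivial -- is the one point that requires a real computation; everything else follows formally from the semidirect-product description together with the transitivity of the $\GL_2$-action on $\C^2\setminus\{0\}$.
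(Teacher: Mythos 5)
Your proof is correct, but it follows a genuinely different route from the paper's. The paper never leaves the coordinates in which $L=\{x=y\}$: it argues by Gaussian elimination, first exhibiting the elementary matrices $A_\lambda,B_\lambda,C_\lambda$ as words in $G:=\langle\mathcal{D}_L,\mu_1,\mu_2,P\rangle$, and then reducing an arbitrary element of $\mathcal{A}_L$ to the identity by the corresponding row and column operations together with $\mathcal{D}_L$. You instead use the parabolic structure $\mathcal{A}_L\cong\G_a^2\rtimes\GL_2$ and split the statement into surjectivity onto $\GL_2$ plus containment of the translation kernel; this is more conceptual and rests on standard facts (generation of $\SL_2$ by elementary unipotents, transitivity of $\GL_2$ on $\C^2\setminus\{0\}$), whereas the paper's elimination is longer but self-contained and factors any given matrix explicitly, with no need to hunt for a word with non-cancelling translation part. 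Your intermediate claims all check out: under your conjugation and pair notation the generators become $\bigl((0,0),\left(\begin{smallmatrix}1&-1\\0&-1\end{smallmatrix}\right)\bigr)$, $\bigl((0,-1),\left(\begin{smallmatrix}1&0\\-1&-1\end{smallmatrix}\right)\bigr)$ and $\bigl((-1,0),-\mathrm{Id}\bigr)$ for $\mu_1,\mu_2,P$ respectively, and $\mathcal{D}_L$ maps onto the torus $\{\diag(1,\mu)\}$, so surjectivity onto $\GL_2$ follows exactly as you describe. The computation you leave implicit does come out nonzero with the most natural choice of word: writing $d:=\diag(1,1,-1)\in\mathcal{D}_L$, the images $U$ of $\mu_1d$ and $V$ of $\mu_2d$ are $\bigl((0,0),\left(\begin{smallmatrix}1&1\\0&1\end{smallmatrix}\right)\bigr)$ and $\bigl((0,-1),\left(\begin{smallmatrix}1&0\\-1&1\end{smallmatrix}\right)\bigr)$, whence $UVU=\bigl((-1,-1),\left(\begin{smallmatrix}0&1\\-1&0\end{smallmatrix}\right)\bigr)$, $(UVU)^2=\bigl((-2,0),-\mathrm{Id}\bigr)$, and therefore $(UVU)^2P=\bigl((-1,0),\mathrm{Id}\bigr)$ is a nonzero element of $G\cap\G_a^2$; transitivity then finishes the proof. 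Do write this verification out in any final version: a priori an unluckily chosen word could reproduce $P$ exactly, and it is only because the kernel is in fact nontrivial that some word is guaranteed to work, so the argument is not complete until one such word is displayed.
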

\begin{proof}
Given any $\lambda\in\C^*$, the matrices 
$$
A_\lambda:=\begin{bmatrix}1&0&0\\0&1&0\\\lambda&0&1\end{bmatrix},\,
B_\lambda:=\begin{bmatrix}1&0&0\\0&1&0\\0&\lambda&1\end{bmatrix},\,\textrm{ and }
C_\lambda:=\begin{bmatrix}1&0&\lambda\\0&1&\lambda\\0&0&1\end{bmatrix}
$$ belong to $\langle \mathcal{D}_L,\mu_1,\mu_2,P\rangle$. Indeed, we have $A_\lambda=\diag(-\lambda^{-1},-\lambda^{-1},1)\cdot\mu_2\cdot\diag(\lambda,\lambda,1),\,B_\lambda=PA_\lambda P$ and $C_\lambda=\diag(1,1,\lambda^{-1})\cdot\mu_1\cdot\diag(-1,-1,\lambda).$\par
Left multiplication by these corresponds to three types of row operations on matrices in $\PGL_3$ and right multiplication corresponds in the same way to three types of column operations. We denote them respectively by $r_1,r_2,r_3,c_1,c_2,c_3$, and we write $d$ for multiplication by an element in $\mathcal D_L$.\par
Let $A=(a_{ij})_{1\leq i,j\leq 3}\in\PGL_3$ be a matrix which is in $\mathcal A_L$, i.e. such that $a_{13}=a_{23}$ and $a_{11}+a_{12}=a_{21}+a_{22}$. We proceed as follows, using only the above mentioned operations. 
\begin{eqnarray*}
A&=&\begin{bmatrix}*&*&*\\ *&*&*\\ *&*&*\end{bmatrix}
\stackrel{d}\longrightarrow\begin{bmatrix}*&*&*\\ *&*&*\\ *&*&1\end{bmatrix}
\stackrel{r_3}\longrightarrow\begin{bmatrix}*&*&0\\ y&z&0\\ *&*&1\end{bmatrix}
\stackrel{c_1\textrm{ and }c_2}\longrightarrow\begin{bmatrix}*&*&0\\ y&z&0\\ -y&-z&1\end{bmatrix}\\
&\stackrel{r_3}\longrightarrow&\begin{bmatrix}*&*&1\\ 0&0&1\\ -y&-z&1\end{bmatrix}
\stackrel{d}\longrightarrow\begin{bmatrix}1&-1&1\\ 0&0&1\\ *&*&1\end{bmatrix}
\stackrel{r_1}\longrightarrow\begin{bmatrix}1&-1&1\\ 0&0&1\\ 0&*&*\end{bmatrix}
\stackrel{r_2}\longrightarrow\begin{bmatrix}1&-1&1\\ 0&0&1\\ 0&*&0\end{bmatrix}\\
&\stackrel{d}\longrightarrow&\begin{bmatrix}1&-1&1\\ 0&0&1\\ 0&1&0\end{bmatrix}
\stackrel{r_3}\longrightarrow\begin{bmatrix}1&0&1\\ 0&1&1\\ 0&1&0\end{bmatrix}
\stackrel{c_3}\longrightarrow\begin{bmatrix}1&0&0\\ 0&1&0\\ 0&1&-1\end{bmatrix}
\stackrel{r_2}\longrightarrow\begin{bmatrix}1&0&0\\ 0&1&0\\ 0&0&-1\end{bmatrix}
\stackrel{d}\longrightarrow\begin{bmatrix}1&0&0\\ 0&1&0\\ 0&0&1\end{bmatrix}
\end{eqnarray*}
In the first step ($d$) we assume that $a_{33}\neq 0$ -- this can always be achieved by performing a row operation of type $r_1$ on $A$ if necessary. In the second step ($r_3$), we use that $a_{13}=a_{23}$. The entries on place $(2,1)$ and $(2,2)$ after the second step are denoted by $y$ and $z$ respectively. In the fifth step ($d$), we use that the entry on place $(1,1)$ is nonzero; this follows from the assumption $a_{11}+a_{12}=a_{21}+a_{22}$ and that $A$ is invertible.
\end{proof}
 
\begin{lemma}\label{lem:A7}
Suppose that $\Dec(L)$ acts on $\mathcal{T}$ so that $\mathcal{A}_L$ has no fixed points. Then $\Dec(L)$ stably fixes an end.
\end{lemma}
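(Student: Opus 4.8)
The plan is to prove that $\Dec(L)$ stably fixes the unique end that $\mathcal{A}_L$ determines. First I would record two soft facts about isometric actions on a complete real tree $\mathcal{T}$. (i) For a fixed end $\omega$, the set of isometries that stably fix $\omega$ (that is, fix a sub-ray of some, equivalently any, representing ray) is a subgroup: if $g,h$ fix all $x_t$ with $t\geq t_0$, then so do $gh$ and $g^{-1}$. (ii) A group acting without a global fixed point stably fixes \emph{at most one} end: if it stably fixed two distinct ends $\omega_1,\omega_2$, then every element would fix a tail of the bi-infinite geodesic $(\omega_1,\omega_2)$ toward each side, hence, by convexity of fixed-point sets, the whole geodesic, producing a global fixed point.

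Since $\mathcal{A}_L$ has property $(\mathrm{F}\R)_{\infty}$ (Lemma~\ref{lem:A5}) and, by hypothesis, no fixed point, Remark~\ref{rmk:FR}(\ref{A9:2}) furnishes an end $\omega$ that $\mathcal{A}_L$ stably fixes, and by (ii) this $\omega$ is unique. Because $\Dec(L)$ is generated by $\mathcal{A}_L$ together with $\sigma$ (Theorem~\ref{thm:generators}, as $\sigma$ is quadratic with three proper base points), fact (i) reduces everything to showing that $\sigma$ stably fixes $\omega$. Here I would use that $\sigma$ is an involution, hence elliptic, and that an elliptic isometry fixing the end $\omega$ automatically fixes the unique ray from any of its fixed points to $\omega$, so it stably fixes $\omega$. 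Thus the whole problem collapses to the single equality $\sigma(\omega)=\omega$.

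The heart of the argument -- and the step I expect to be the main obstacle -- is to show that the torus $\mathcal{D}_L$ has no fixed point in $\mathcal{T}$; note this cannot come from $\mathcal{D}_L\cong\C^*$ alone, since $\C^*$ does not have $(\mathrm{F}\R)_{\infty}$. Suppose instead that $\Phi:=\mathrm{Fix}(\mathcal{D}_L)\neq\emptyset$. As $\mathcal{D}_L\subset\mathcal{A}_L$ stably fixes $\omega$, each $d\in\mathcal{D}_L$ fixes some $\phi\in\Phi$ and the end $\omega$, hence fixes the ray $[\phi,\omega)$ pointwise; therefore $\Phi$ contains a ray to $\omega$. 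On the other hand $\Gamma:=\langle\mu_1,\mu_2,P\rangle$ is finitely generated, so by Remark~\ref{rmk:FR}(\ref{A9:1}) it has a nonempty fixed-point set $\Psi$, and the same reasoning shows $\Psi$ contains a ray to $\omega$. Two rays representing $\omega$ share a tail, so $\Phi\cap\Psi\neq\emptyset$; but $\langle\mathcal{D}_L,\Gamma\rangle$ fixes this intersection pointwise, and $\langle\mathcal{D}_L,\Gamma\rangle=\mathcal{A}_L$ by Lemma~\ref{lem:A6}, contradicting that $\mathcal{A}_L$ has no fixed point. Hence $\mathcal{D}_L$ is fixed-point-free.

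Finally I would exploit the direct computation $\sigma\,\diag(s,s,t)\,\sigma^{-1}=\diag(s^{-1},s^{-1},t^{-1})$, which shows $\sigma\mathcal{D}_L\sigma^{-1}=\mathcal{D}_L$. Transporting the stable fixing of $\omega$ by the isometry $\sigma$ then shows that $\mathcal{D}_L=\sigma\mathcal{D}_L\sigma^{-1}$ also stably fixes $\sigma(\omega)$. Since $\mathcal{D}_L$ has no fixed point, the uniqueness principle (ii) forces $\sigma(\omega)=\omega$, which by the reduction in the second paragraph completes the proof. The only genuinely delicate point is the fixed-point-freeness of $\mathcal{D}_L$: it is exactly what lets the uniqueness principle be applied to the $\sigma$-invariant subgroup $\mathcal{D}_L$, and it is obtained not from any property of $\mathcal{D}_L$ in isolation but by playing it against its finitely generated complement $\langle\mu_1,\mu_2,P\rangle$ through the generation statement of Lemma~\ref{lem:A6}.
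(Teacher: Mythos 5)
Your proof is correct. It shares the paper's skeleton---the unique end $\omega$ stably fixed by $\mathcal{A}_L$, a $\sigma$-invariant diagonal subgroup, the key equality $\sigma\omega=\omega$, and then generation of $\Dec(L)$ by $\mathcal{A}_L$ and $\sigma$ via Theorem~\ref{thm:generators}---but you reach $\sigma\omega=\omega$ by a genuinely different mechanism. The paper argues directly by contradiction: if $\sigma\omega\neq\omega$, then the $\sigma$-invariant subgroup $C=\langle\mathcal{D}_L,P\rangle$ stably fixes both ends, hence pointwise fixes the line joining them; since $\mu_1,\mu_2$ also fix a tail of the ray toward $\omega$ lying on that line, Lemma~\ref{lem:A6} forces all of $\mathcal{A}_L$ to fix that tail, contradicting the hypothesis. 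You instead extract an intermediate statement---$\mathcal{D}_L$ is fixed-point free---obtained by playing $\mathcal{D}_L$ against $\langle\mu_1,\mu_2,P\rangle$: both fixed-point sets (nonempty, the first by the assumption toward contradiction, the second by Remark~\ref{rmk:FR}~(\ref{A9:1}) applied to this finitely generated subgroup) would contain a ray to $\omega$, hence would meet, and Lemma~\ref{lem:A6} would then give $\mathcal{A}_L$ a fixed point. You then apply the uniqueness-of-stably-fixed-end principle to the fixed-point-free, $\sigma$-invariant group $\mathcal{D}_L$, which stably fixes both $\omega$ and $\sigma\omega$. Both arguments rest on the same pillars (convexity of fixed sets in a real tree, Lemma~\ref{lem:A6}, and $\sigma$-invariance of the diagonal part); the paper's is shorter, needing a single reductio, while yours isolates a cleanly quotable fact and, notably, justifies a step the paper only asserts: that $\sigma\omega=\omega$ implies $\sigma$ \emph{stably} fixes $\omega$, which you correctly derive from ellipticity of the involution $\sigma$ (an isometry of finite order fixing an end fixes pointwise the ray from any of its fixed points to that end). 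One simplification available to you: the appeal to Remark~\ref{rmk:FR}~(\ref{A9:1}) is unnecessary, since the three generators $\mu_1,\mu_2,P$ each fix a tail of the defining ray $(x_t)$, hence so does the group they generate; your anchor-point trick is, on the other hand, exactly what is needed for $\mathcal{D}_L$, which is not finitely generated, so that no uniform tail exists and a genuine common fixed point must be used as the anchor.
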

\begin{proof}
Since $\mathcal{A}_L$ has property $(\mathrm{F}\mathbb{R})_{\infty}$ and has no fixed points, it stably fixes an end (Remark~\ref{rmk:FR} (\ref{A9:2})). Observe that this fixed end is unique: if $\mathcal{A}_L$ stably fixes two different ends $\omega_1,\omega_2$, then $\mathcal{A}_L$ pointwise fixes the line joining the two ends and has therefore fixed points (this uses that the only isometries on $\R$ are translations and reflections \cite[\S I.2, Lemma 2.1]{C01}).

Let $\omega$, represented by the ray $(x_t)_{t\geq0}$, be the unique end which is stably fixed by $\mathcal{A}_L$ and define $C:=\langle \mathcal{D}_L,P\rangle$. Being a subgroup of $\mathcal{A}_L$, $C$ obviously also stably fixes $\omega$. Note that the end $\sigma\omega$ is stably fixed by $\sigma\mathcal{A}_L\sigma^{-1}$. In particular, since $\sigma C\sigma^{-1}=C$, the end $\sigma\omega$ is also stably fixed by $C$. If $\sigma\omega=\omega$, then $\omega$ is stably fixed by $\sigma$ and by Theorem~\ref{thm:generators}, $\omega$ is stably fixed by $\Dec(L)$. Otherwise, let $l$ be the line joining $\omega$ and $\sigma\omega\neq\omega$. Since $C$ stably fixes $\omega$ and $\sigma\omega$, it stably fixes both ends of $l$. In particular, the line $l$ is pointwise fixed by $C$. Since $\mu_1,\mu_2\in\mathcal{A}_L$, $\mu_1,\mu_2$ stably fix the end $\omega$ and therefore, $x_t$ is fixed by $\mu_1,\mu_2$ for $t\geq t_0$ for some $t_0$, and hence, by Lemma~\ref{lem:A6}, $x_t$ is fixed by all of $\mathcal{A}_L$ for $t\geq t_0$, contradicting the assumption.
\end{proof}

\begin{proof}[Proof of Proposition~$\ref{prop:amalgam}$] 
Recall that $\mu_1,\mu_2\in\mathcal{A}_L$ and note that $\sigma\mu_1$ has order 3 and that $\sigma\mu_2$ has order $6$. It follows that 
\[\sigma=(\mu_1\sigma)\mu_1(\mu_1\sigma)^{-1}\]
By Theorem~\ref{thm:generators}, $\Dec(L)$ is generated by  $\sigma$ and $\mathcal{A}_L$. It follows that $\mathcal{A}_1:=\mathcal{A}_L$ and $\mathcal{A}_2:=\sigma\mathcal{A}_L\sigma$ generate $\Dec(L)$. 

Consider an action of $\Dec(L)$ on $\mathcal{T}$. It induces an action of $\mathcal{A}_L$, which has property $(\mathrm{F}\R)_{\infty}$ by Lemma~\ref{lem:A5} (i.e. $\mathcal{A}_L$ has a fixed point or stably fixes an end by Remark~\ref{rmk:FR} (\ref{A9:2})). If $\mathcal{A}_L$ has no fixed point, Lemma~\ref{lem:A7} implies that $\Dec(L)$ stably fixes an end, and then we are done.\par
Assume that $\mathcal{A}_L$ has a fixed point. We conclude the proof by showing that in this case, even $\Dec(L)$ has a fixed point.\par
For $i=1,2$, let $\mathcal{T}_i$ be the set of fixed points of $\mathcal{A}_i$. The two trees are exchanged by $\sigma$. If $\mathcal{T}_1\cap \mathcal{T}_2\neq\emptyset$, $\Dec(L)$ has a fixed point since $\langle \mathcal{A}_1,\mathcal{A}_2\rangle=\Dec(L)$. Let us consider the case where $\mathcal{T}_1$ and $\mathcal{T}_2$ are disjoint. \par

Let $\mathcal{S}:=[x_1,x_2]$, $x_i\in\mathcal{T}_i$, be the minimal segment joining the two trees and $s>0$ its length. Let $C:=\langle \mathcal{D}_L,P\rangle$. Then $\mathcal{S}$ is pointwise fixed by $C\subset\mathcal{A}_1\cap\mathcal{A}_2$ and reversed by $\sigma$. For $i=1,2$, the image of $\mathcal{S}$ by $\mu_i$ is a segment $\mu_i(\mathcal{S})=[x_1,\mu_ix_2]$. By Lemma~\ref{lem:A6}, $\langle C,\mu_1,\mu_2\rangle=\mathcal{A}_1$, so it follows that for $i=1$ or $i=2$, we have $\mu_i(\mathcal S)\cap \mathcal S=\{x_1\}$. Otherwise, because $\mathcal{T}$ is a tree and $\mathcal A_1$ acts by isometries, both $\mu_1,\mu_2$ fix $\mathcal{S}$ pointwise and so $\mathcal{A}_1$ fixes $\mathcal{S}$ pointwise and in particular it fixes $x_2$ -- this would contradict $\mathcal{T}_1\cap\mathcal{T}_2=\emptyset$. Choose an element $I\in\{1,2\}$ such that $\mu_I(\mathcal{S})\cap\mathcal S=\{x_1\}$. 

Finally we arrive at a contradiction by computing $d(x_1,(\sigma\mu_I)^kx_1)$ in two different ways. On the one hand we see that this distance is $sk$, on the other hand we have $(\sigma\mu_I)^6=1$. More generally, we show that
\[d(\ (\sigma\mu_I)^kx_1,\ (\sigma\mu_I)^lx_1)=| k-l| s\]
for all $k,l$. Since we are on a real tree, it suffices to show this for $k,l$ with $|k-l|\leq2$ (cf. \cite[Lemma A.4]{C13}). By translation, we only have to check it for $l=0,k=1,2$. For $k=1$, we have $d(\sigma\mu_Ix_1,x_1)=d(\sigma x_1,x_1)=d(x_2,x_1)=s$. For $k=2$, the segment $\mu_I(\mathcal{S})=[x_1,\mu_Ix_2]$ intersects $\mathcal{S}$ only at $x_1$. In particular, $d(\mu_Ix_2,x_2)=2s$ and hence 
\[d(\sigma\mu_I\sigma\mu_I x_1,x_1)=d(\sigma\mu_I\sigma x_1,x_1)=d(\mu_I\sigma x_1,\sigma x_1)=d(\mu_I x_2,x_2)=2s.\]
It follows that $\mathcal T_1$ and $\mathcal T_2$ cannot be disjoint, and we are done.
\end{proof}

\vskip\baselineskip


\begin{thebibliography}{abcdefghklm}


\bibitem[AC2002]{alberich}
\textsc{Maria Alberich-Carrami\~nana},
\textit{Geometry of the plane Cremona maps}.
Lecture Notes in Mathematics \textbf{1769}, Springer-Verlag, Berlin, 2002.

\bibitem[Bla2012]{Bla12}
{\sc J\'{e}r\'{e}my Blanc:} 
{\em Simple Relations in the Cremona Group}, 
Proc. Amer. Math. Soc. {\bf 140} (2012), 1495--1500.

\bibitem[BPV2009]{BPV2009}
\textsc{J\'er\'emy Blanc, Ivan Pan, Thierry Vust}
\textit{On birational transformations of pairs in the complex plane}
Geom. Dedicata {\bf 139} (2009), 57--73.

\bibitem[Cas1901]{Cas}
\textsc{Guido Castelnuovo}:
{\it Le trasformazioni generatrici del gruppo cremoniano nel piano},
Atti della R. Accad. delle Scienze di Torino {\bf 36} (1901), 861--874.

\bibitem[Cis2001]{C01}
\textsc{Ian Chiswell}:
{\it Introduction to $\Lambda$-trees},
World Scientific, River Edge, NJ, 2001.

\bibitem[Cor2013]{C13}
\textsc{Yves de Cornulier}:
{\it The Cremona Group is not an amalgam}, appendix of the article
\textsc{Serge Cantat, St\'{e}phane Lamy}:
{\it Normal subgroups in the Cremona group},
Acta Math., {\bf210} (2013), 31--94.

\bibitem[EJZ010]{EJ10}
\textsc{Mikhail Ershov, Andrei Jaikin-Zapirain}:
{\it Property $(T)$ for noncommutative universal lattices},
Invent. Math., {\bf179} (2010), 303--347.

\bibitem[Ser1977]{S77}
\textsc{Jean-Pierre Serre}: 
{\it Arbres, Amalgames, $\mathrm{SL}_2$.} 
Asterisque {\bf 46}, Soc. Math. France, 1977. English translation: {\it Trees}, Springer 1980.

\bibitem[Wat1982]{Wat82}
\textsc{Yasuo Watanati:} 
{\it Property $(T)$ of Kazhdan implies property $(FA)$ of Serre.} Math. Japonica {\bf27} (1982), 97--103.

\end{thebibliography}
\end{document}